\numberwithin{equation}{section}
\newtheorem{thm}{Theorem}[section]
\newtheorem{pro}[thm]{Proposition}
\newtheorem{lem}[thm]{Lemma}
\theoremstyle{definition}
\theoremstyle{remark}
\newtheorem{claim}[thm]{Claim}
\begin{document}
%%%%%%%%%%% Begin Topmatter %%%%%%%%%%%%%%%%%

\title[On $Q$-manifolds bundles]
{On $Q$-manifolds bundles}

\author{V. Valov}
\address{Department of Computer Science and Mathematics,
Nipissing University, 100 College Drive, P.O. Box 5002, North Bay,
ON, P1B 8L7, Canada} \email{veskov@nipissingu.ca}

\author{J. West}
\address{Department of Mathematics, Cornell University,
310 Malott Hall, Ithaca, New York 14853}
\email{west@math.cornell.edu}

\thanks{The first author was partially supported by NSERC
Grant 261914-19.}

 \keywords{$ANR$-fibration, $C$-spaces, $Q$-bundles, homological $Z$-sets, $Z$-sets}

\subjclass[2010]{Primary 55M10, 55M15; Secondary 54F45, 54C55}

%%%%%%%%%% End topmatter %%%%%%%%%%%%%%%%%%%%%
%%%%%%%%%% End topmatter %%%%%%%%%%%%%%%%%%%%%

\begin{abstract}
We prove a homological characterization of $Q$-manifolds bundles over $C$-spaces. This provides a partial answer to Question QM22 from \cite{w}.
\end{abstract}

\maketitle
\markboth{}{$Q$-bundles}

%%%%%%%%%%%%%%%%%%%%%%%%%%%%%%%%%%%%%%%%%%%%%%%%%%%%%%%%%%%%%%%%
%%%%%%%%%%%%%%%%%%%%%%%%%%%%%%%%%%%%%%%%%%%%%%%%%%%%%%%%%%%%%%%%

%%%%%%%%%%%%%%%%%% TABLE OF CONTENT %%%%%%%%%%%%%%%%%%%%%%%%%%%%%%%%%%

%\tableofcontents
\section{Introduction}
The second author raised the question \cite[QM22]{w} whether there exists a \v{C}ech-cohomology version of the fibred general position theory of Toru\'{n}czyk-West \cite{tw} along the lines of Daverman-Walsh results \cite{dw} that detects the $Q$-manifold bundles among the $ANR$-fibrations with compact fibers.
In the present paper we present a homological version of Toru\'{n}czyk-West's \cite{tw} characterization of $Q$-manifold bundles over $C$-spaces.

 Following \cite{tw}, a {\em trivial $ANR$-fibration (resp., $AR$-fibration)} is a map $p:E\to B$ of paracompact Hausdorff spaces such that for some separable metric $ANR$-space (resp., $AR$-space) $X$ there is a closed embedding $i:E\hookrightarrow X\times B$ and a retraction $r:X\times B\to i(E)$ making the diagram commute
$$
\xymatrix{
E \ar[r]^{i}\ar[dr]_{p} & X\times B \ar[r]^{r}\ar[d]^{\pi_B} &  i(E)\ar[dl]^{\pi_B}\\
& B \\
}
$$
and a fiber preserving homotopy $H:X\times B\times\mathbb I\to X\times B$ from $r$ to $r_b\times\rm{id}_B$ for some $b\in B$, where $r_b:X\to i(p^{-1}(b))$ is defined by $r_b(x,b)=r((x,b))$.
A map $p:E\to B$ of paracompact Hausdorff spaces is {\em an $ANR$-fibration} if there is an open cover $\mathcal U$ of $B$ such that $p:p^{-1}(U)\to U$ is a trivial $ANR$-fibration for each $U\in\mathcal U$. We call the cover $\mathcal U$ a {\em local trivialization of $p$}. Note that every $U\in\mathcal U$ is also paracompact. If the maps $r$ and $H$ may be taken proper, $p$ is called a {\em proper $ANR$-fibration}.
If each $X_U$ is locally compact, compact, or complete, we say that $p:E\to B$ is a {\em locally compact, compact, or complete $ANR$-fibration}. Every complete trivial $ANR$-fibration has the following property: For every paracompact space $Z$, a closed subset $A\subset Z$ and maps $f:A\to E$ and $g:Z\to B$ such that $p\circ f=g|A$ there is a map $\widetilde g:V\to B$, where $V$ is a neighborhood of $A$ in $Z$, such that $p\circ\widetilde g=g|V$ and $\widetilde g$ extends $f$. A map $p:E\to B$ with this property is said to be a {\em locally soft map}. A map $p$ is {\em soft} if in the above definition $V$ is the whole space $Z$ \cite{sc}. Let us note that every locally soft map is open.

Let $p:E\to B$ be an $ANR$-fibration and $A\subset B$. We say that $p^{-1}(A)$ has the
{\em fibred disjoint $n$-disks property}
%if for each $\mathcal U$-small open set $V$
if each pair of fiber preserving maps $f,g:\mathbb I^n\times A\to p^{-1}(A)$ can be approximated arbitrary closely by fiber preserving maps
$f',g':\mathbb I^n\times A\to p^{-1}(A)$ with disjoint images, see \cite{tw}. Here, a map $f:\mathbb I^n\times A\to p^{-1}(A)$ is said to be {\em fiber preserving} provided $p(f(x,y))=y$ for all $(x,y)\in \mathbb I^n\times A$.
%In case this is true for every $n$, we say that $p:E\to B$  has the {\em fibred general position  property} \cite{tw}.
The fibred disjoint $2$-disks property is simply called {\em fibred disjoint disks property}. If there is an open cover $\mathcal U$ of $B$ such that $p^{-1}(U)$ has the fibred disjoint $n$-disks property for all $n$ and $U\in\mathcal U$, then $p$ satisfies the {\em fibred general position \rm{(FGP)}-property} \cite{tw}.

    Recall that a closed set $F\subset X$ is said to be a {\em $Z_n$-set} in $X$ if the set $C(\mathbb I^n,X\setminus F)$ is dense in $C(\mathbb I^n,X)$. Note that if $X$ is a metric $LC^{n-1}$-space, then a closed set $F\subset X$ is $Z_n$-set iff for each at most $n$-dimensional metric compactum $Y$ the set $\{f\in C(Y,X):f(Y)\cap F=\varnothing\}$ is dense in $C(Y,X)$, see \cite{b}.
We also say that a map $f:K\to X$ is a {\em $Z_n$-map} provided $f(K)$ is a $Z_n$-set in $X$.
 For consistency with \cite{to1},
%$X\in\rm{DD^nP}$ if and only if the set of all $Z_n$-maps  $\mathbb B^n\to X$ is dense and $G_\delta$ in $C(\mathbb B^n,X)$.
instead of using the term $Z_{\infty}$ to mean $Z_n$ for all n as is often done,  we call $Z_\infty$-sets and $Z_\infty$-maps, respectively, $Z$-sets and $Z$-maps.
%A closed set $A\subset X$ is a {\em strong $Z$-set} \cite{bbmw} if for every open cover $\mathcal U$ of $X$ and a sequence of maps $\{f_i\}\subset C(Q,X)$, there is a sequence $\{g_i\}\subset C(Q,X)$ such that each $g_i$ is $\mathcal U$-close to $f_i$ and $\overline{\bigcup_{i\geq 1}g_i(Q)}\cap A=\varnothing$.

There are homological analogues of $Z_n$-sets and $Z_n$-maps. A closed set $F\subset X$ is called a {\em homological $Z_n$-set in $X$} if the singular homology groups $H_k(U,U\setminus F)$ are trivial for all open sets $U\subset X$ and all $k\leq n$, see \cite{bck}. It can be shown that if $X$ does not have isolated points then every homological $Z_n$-set in $X$ is nowhere dense. The homological $Z_n$-property is finitely additive and hereditary with respect to closed subsets \cite{bck}.
A map $f:K\to X$  is a {\em homological $Z_n$-map} provided the image $f(K)$ is a homological $Z_n$-set in $X$. Homological $Z_\infty$-sets were considered in \cite{dw} under the name sets of infinite codimension.

Combining \cite[Corollary 3.3]{to2} and \cite[Theorem 2.1]{bck}, we have the following:
\begin{pro}\label{z}
Let $X$ be an $LC^{n}$-space with $n\geq 2$. Then, a closed subset $A$ of $X$ is a $Z_n$-set in $X$ provided $A$ is an $LCC^1$ homological $Z_n$-set in $X$. Equivalently, $A$ is a $Z_n$-set iff it is a $Z_2$-set and a homological $Z_n$-set.
%In particular, $Z_n$-sets in $LC^n$-spaces are homological $Z_n$-sets.
\end{pro}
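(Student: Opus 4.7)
The proposition divides into two claims: first, that any $LCC^1$ homological $Z_n$-set in $X$ is a $Z_n$-set, and second, the equivalent reformulation. My plan is to invoke \cite[Theorem 2.1]{bck} for the first claim and then to combine it with \cite[Corollary 3.3]{to2} to handle the second. The first claim is essentially the statement of \cite[Theorem 2.1]{bck}, so my only task is to confirm that the hypotheses on $X$ (namely $LC^n$ with $n\ge 2$) and on $A$ (namely $LCC^1$ plus homological $Z_n$) are exactly those required there.

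For the equivalent reformulation I would prove both directions separately. That a $Z_n$-set is a $Z_2$-set is immediate from the definition since any approximation of $\mathbb I^n$-maps yields approximations of $\mathbb I^2$-maps. That a $Z_n$-set is also a homological $Z_n$-set is a standard consequence of the denseness of $C(\mathbb I^k, X\setminus A)$ in $C(\mathbb I^k, X)$ for $k\le n$: combined with the $LC^n$ hypothesis to control small homotopies inside an arbitrary open $U\subset X$, every relative $k$-cycle in $(U, U\setminus A)$ can be represented by a chain of simplices factoring through cubes, pushed off $A$ by a homotopy in $U$, and thereby realized in $U\setminus A$, so that $H_k(U, U\setminus A)=0$. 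For the converse, assuming $A$ is both a $Z_2$-set and a homological $Z_n$-set, I would invoke \cite[Corollary 3.3]{to2} to upgrade the $Z_2$ hypothesis to the $LCC^1$ condition on $A$; the first claim then yields that $A$ is a $Z_n$-set.

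The main obstacle is entirely contained in the two cited black boxes: Toru\'nczyk's upgrade of a $Z_2$-set in an $LC^1$-space to an $LCC^1$-set, and the homological-to-geometric passage from $LCC^1$ plus homological $Z_n$ to $Z_n$ in \cite[Theorem 2.1]{bck}. Once these are accepted, the only work on my end is careful matching of hypotheses, noting that the assumption $LC^n$ with $n\ge 2$ is precisely strong enough to apply both results simultaneously (it gives $LC^1$ for \cite[Corollary 3.3]{to2} and the full $LC^n$ needed for \cite[Theorem 2.1]{bck}).
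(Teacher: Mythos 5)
Your proposal follows essentially the same route as the paper, which offers no separate argument beyond the phrase ``Combining \cite[Corollary 3.3]{to2} and \cite[Theorem 2.1]{bck}'': the forward implication is exactly \cite[Theorem 2.1]{bck}, and the reformulation uses \cite[Corollary 3.3]{to2} to upgrade the $Z_2$ hypothesis to $LCC^1$, just as you describe. Your additional sketch of the easy direction (that a $Z_n$-set is a $Z_2$-set and a homological $Z_n$-set) is standard and consistent with what the paper leaves implicit.
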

All function spaces in this paper, if not explicitly stated otherwise, are equipped with the limitation topology, see \cite{bo1} and \cite{to3}.
Recall that a set $U\subset C(X,E)$ is open with respect to the limitation topology if for every $f\in U$ there is an open cover $\mathcal V$ of $Y$ such that $U$ contains the set
$B(f,\mathcal V)=\{g\in C(X,E):g{~}\mbox{is}{~}\mathcal V-\mbox{close to} f\}.$
Let $p:E\to B$ be an $ANR$-fibration, $K\subset B$ and $Z$ an arbitrary space. We denote by $C_{P}(Z\times K,p^{-1}(K))$ the set of all continuous fiber preserving maps from $Z\times K$ into $p^{-1}(K)$. One can show that each $f\in C_{P}(Z\times K,p^{-1}(K))$ is a perfect map provided $Z$ is compact.

\section{Preliminary results}
\begin{lem}\label{open}
Let $p:E\to B$ be a locally compact trivial $ANR$-fibration and $A\subset B$ be a closed set. Then for every space $K$ such that $K\times B$ is paracompact the restriction map
$q_A:C_P(K\times B,E)\to C_P(K\times A,p^{-1}(A))$, $q_A(f)=f|(K\times A)$, is open.
\end{lem}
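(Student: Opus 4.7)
The plan is to produce, for every $f\in C_P(K\times B,E)$ and every open cover $\mathcal V$ of $E$, an open cover $\mathcal W$ of $p^{-1}(A)$ such that each fiber-preserving $h\in B(g,\mathcal W)$, where $g=q_A(f)$, extends to some $\tilde f\in B(f,\mathcal V)$; this is exactly openness of $q_A$.

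\textbf{Setup and refinement of covers.} Using the definition of a trivial $ANR$-fibration I identify $E$ with a closed subspace of $X\times B$, $p$ with $\pi_B|_E$, and fix the retraction $r:X\times B\to E$. Since $X$ is a separable metric $ANR$, Wojdys\l{}awski's theorem lets me regard $X$ as a neighborhood retract of a convex subset $C$ of a normed linear space $L$, with retraction $\rho:W\to X$ defined on an open neighborhood $W\supset X$ in $C$. Writing $f=(\phi,\pi_B)$ with $\phi:K\times B\to X$, continuity of $r$ yields an open cover $\mathcal V_1$ of $X\times B$ whose members are sent by $r$ into members of $\mathcal V$, so $r$ converts $\mathcal V_1$-close fiber-preserving maps into $\mathcal V$-close ones in $E$. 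Continuity of $\rho$ together with local convexity of $L$ then produces an open cover $\mathcal V_2$ of $X$ such that for every $V_2\in\mathcal V_2$, $\mathrm{conv}(V_2)\subset W$ and $\rho(\mathrm{conv}(V_2))$ remains in the $X$-slice of some member of $\mathcal V_1$ over a neighborhood of the associated base point. Pulling $\mathcal V_2$ back along $\pi_X|_{p^{-1}(A)}$ gives the required open cover $\mathcal W$ of $p^{-1}(A)$.

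\textbf{Construction of the extension.} Given $h=(\phi_h,\pi_B|_{K\times A})\in B(g,\mathcal W)$, the $ANE$ property of the metric $ANR$ $X$, applied to the closed subset $K\times A$ of the paracompact space $K\times B$, extends $\phi_h$ to $\Phi_0:N\to X$ on some open neighborhood $N$ of $K\times A$; after shrinking $N$ I may assume $\phi(k,b)$ and $\Phi_0(k,b)$ lie in a common member of $\mathcal V_2$ for every $(k,b)\in N$. By normality of $K\times B$ I choose a continuous $\mu:K\times B\to[0,1]$ with $\mu|_{K\times A}=1$ and $\mathrm{supp}(\mu)\subset N$, and define
\[
\Phi(k,b)=\begin{cases}\rho\bigl(\mu(k,b)\Phi_0(k,b)+(1-\mu(k,b))\phi(k,b)\bigr),& (k,b)\in N,\\ \phi(k,b),&(k,b)\notin\mathrm{supp}(\mu),\end{cases}
\]
where the convex combination is taken in $L$ and lies in $W$ by the choice of $\mathcal V_2$. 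Setting $\tilde f(k,b)=r(\Phi(k,b),b)$ produces a continuous, fiber-preserving map; on $K\times A$ one has $\mu\equiv 1$, so $\Phi=\rho\circ\phi_h=\phi_h$, and since $h(K\times A)\subset E$ is fixed by $r$, $\tilde f|_{K\times A}=h$. Finally, $\tilde f$ is $\mathcal V$-close to $f$ by the refinement properties of $\mathcal V_1$ and $\mathcal V_2$.

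\textbf{Main obstacle.} The delicate step is the coordination of cover refinements: closeness in the limitation topology is measured by a cover $\mathcal V_1$ of the \emph{total} space $X\times B$ that genuinely depends on the base coordinate, whereas the convex-gluing step only provides control in the $X$-direction. The cover $\mathcal V_2$ of $X$ must therefore be chosen fine enough that both (i) the convex segments used for gluing remain in the retract domain $W$, and (ii) after applying $\rho$ the resulting points still lie in the $X$-slice of a single element of $\mathcal V_1$ over the appropriate base point. Because $K$ is only assumed to make $K\times B$ paracompact (not compact), one cannot reduce to a Urysohn function on $B$ alone and must produce $\mu$ directly on the product, which is exactly what paracompactness of $K\times B$ supplies.
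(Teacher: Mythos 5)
Your overall strategy---reproving the controlled fiber-preserving extension property from scratch, where the paper simply cites \cite[Proposition (A10)]{tw} for the extension and \cite{bo1} for the fact that $B(q_A(f),\mathcal V)$ is a limitation-neighborhood of $q_A(f)$---is legitimate in outline, and your gluing construction of $\Phi$ via the function $\mu$, the convex hull in $L$, and the retraction $\rho$ is the standard one (it is fiber-preserving, restricts to $h$ on $K\times A$, and is well defined). The problem is that the step you yourself flag as the ``main obstacle'' is a genuine gap, not merely a delicate point, and your argument does not close it.

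Specifically, you take $\mathcal W=\pi_X^{-1}(\mathcal V_2)\cap p^{-1}(A)$ for a cover $\mathcal V_2$ of $X$ alone, so the hypothesis $h\in B(g,\mathcal W)$ controls the distance from $\phi_h(k,a)$ to $\phi(k,a)$ by a modulus depending only on the $X$-coordinate, uniformly in $a\in A$. But $\mathcal V$-closeness in $E\subset X\times B$ demands a fineness in the $X$-direction that genuinely depends on the base point, and for non-compact $B$ no single cover of $X$ dominates it. For instance, let $E=\mathbb R\times\mathbb N$, $p$ the projection, $K$ a point, $A$ the even integers, $f\equiv 0$ on each fiber, and $\mathcal V$ a cover of $E$ whose mesh in the $\mathbb R$-direction over the fiber $\{n\}$ is $1/n$: then for any cover $\mathcal V_2$ of $\mathbb R$ there are maps $h\in B(g,\pi_X^{-1}(\mathcal V_2))$ that are not even $\mathcal V$-close to $g$ over large $n\in A$, so \emph{no} extension of $h$ can lie in $B(f,\mathcal V)$. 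Consequently the property you demand of $\mathcal V_2$---that $\rho(\mathrm{conv}(V_2))$ land in the $X$-slice of a single member of $\mathcal V_1$ over every relevant base point---cannot be arranged. The repair is to replace the cover of $X$ by a positive continuous modulus $\varepsilon$ defined on $E$ itself (equivalently, a cover of $p^{-1}(A)$ whose $X$-width varies with the point of $E$) and to prove that $d_E$-closeness within such a modulus implies $\mathcal V$-closeness; that is exactly the content of the paper's Lemma \ref{limitation}, whose proof uses local compactness of $X$ in an essential way (the exhaustion $\{X_i\}$ and the pseudo-metrics $\sigma_i$ on $B$). Your argument never invokes local compactness, which is a symptom of the missing step.
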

\begin{proof}
Let $W\subset C_P(K\times B,E)$ be an open set and $f\in W$. Then there exists an open cover $\mathcal U$ of $E$ such that
$B(f,\mathcal U)\subset W$.
By \cite[Proposition (A10)]{tw}, there is an open cover $\mathcal V$ of $E$ such that any map $h\in C_P(K\times A,p^{-1}(A))$,
which is $\mathcal V$-close to $f|(K\times A)$, is extended to a map $\widetilde h\in B(f,\mathcal U)$. This means that the set
$B(q_A(f),\mathcal V)$ is contained in $q_A(W)$. Since the interior
of $B(q_A(f),\mathcal V)$ in $C_P(K\times A,p^{-1}(A))$ contains $q_A(f)$ (see \cite{bo1}), $q_A(W)$ is  open in $C_P(K\times A,p^{-1}(A))$.
\end{proof}

 In case $p:E\to B$ is a trivial locally compact fibration,
we need another description of the limitation topology on $C_P(K\times B,E)$.
If $(X,d)$ is the associated with $p$ separable metric $ANR$-space, we consider the
pseudo-metric $d_E$ on $E$ defined by
$d_E(x,y)=d(\pi_X(x),\pi_X(y))$, where $\pi_X:X\times B\to X$ is the projection.
\begin{lem}\label{limitation}
Let $p:E\to B$ be a trivial $ANR$-fibration and $(X,d)$ be a locally compact separable metric space associated with $p$.
Then for any space $K$ a set $U\subset C_P(K\times B,E)$ is open with respect to the limitation topology iff for every $f\in U$ there is a continuous function $\varepsilon:E\to (0,1]$ such that $U$ contains the set
$$\displaystyle B_{d_E}(f,\varepsilon)=\{g\in C_P(K\times B,E):d_E(f(y),g(y))<\varepsilon (f(y)) \forall y\in K\times B\}.$$
\end{lem}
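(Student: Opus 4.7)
The plan is to verify that the limitation topology on $C_P(K\times B,E)$ coincides with the topology whose basic neighborhoods at $f$ are the sets $B_{d_E}(f,\varepsilon)$ for continuous $\varepsilon:E\to(0,1]$. I need to show two containments: every $B(f,\mathcal V)$ contains some $B_{d_E}(f,\varepsilon)$, and every $B_{d_E}(f,\varepsilon)$ contains some $B(f,\mathcal V)$. Throughout I use that $d_E$ is continuous on $E\times E$ (since $\pi_X$ is continuous and $d$ is continuous on $X$) and that $E$, as a closed subspace of the paracompact space $X\times B$, is paracompact.

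For one direction, starting from a continuous $\varepsilon$ I build a cover pointwise. For each $z\in E$ set
$$V_z=\{z'\in E : d_E(z,z')<\tfrac{1}{8}\varepsilon(z)\text{ and }\varepsilon(z')>\tfrac{1}{2}\varepsilon(z)\},$$
which is an open neighborhood of $z$, so $\mathcal V=\{V_z\}_{z\in E}$ is an open cover of $E$. If $g\in B(f,\mathcal V)$, then for each $y$ both $f(y)$ and $g(y)$ lie in some $V_z$, and the triangle inequality yields $d_E(f(y),g(y))<\tfrac{1}{4}\varepsilon(z)<\tfrac{1}{2}\varepsilon(f(y))$, so $g\in B_{d_E}(f,\varepsilon)$.

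For the reverse direction I convert an open cover into a continuous radius function. Given $\mathcal V$, for each $z\in E$ fix $V_z\in\mathcal V$ with $z\in V_z$ and a rectangular basic open $U_z\times W_z\subseteq X\times B$ with $z\in E\cap(U_z\times W_z)\subseteq V_z$. The sets $E\cap(U_z\times W_z)$ form a cover of $E$ refining $\mathcal V$; by paracompactness take a locally finite refinement $\{O_\alpha\}_{\alpha\in A}$, and for each $\alpha$ pick $z_\alpha\in E$ with $O_\alpha\subseteq E\cap(U_{z_\alpha}\times W_{z_\alpha})\subseteq V_{z_\alpha}$, writing $U_\alpha=U_{z_\alpha}$, $W_\alpha=W_{z_\alpha}$. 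Choose a subordinate partition of unity $\{\phi_\alpha\}$, set $\sigma_\alpha(x)=\min(1,d(x,X\setminus U_\alpha))$, and define
$$\varepsilon(z)=\sum_{\alpha\in A}\phi_\alpha(z)\,\sigma_\alpha(\pi_X(z)).$$
This is continuous and lies in $(0,1]$ because $\phi_\alpha(z)>0$ forces $z\in U_\alpha\times W_\alpha$, whence $\sigma_\alpha(\pi_X(z))>0$.

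To verify: suppose $d_E(f(y),g(y))<\varepsilon(f(y))$ and write $z=f(y)$, $z'=g(y)$, which share the fibre over $b=p(z)$. Since $\varepsilon(z)$ is a convex combination, $\varepsilon(z)\leq\sigma_{\alpha_0}(\pi_X(z))\leq d(\pi_X(z),X\setminus U_{\alpha_0})$ for some $\alpha_0$ with $\phi_{\alpha_0}(z)>0$; hence $\pi_X(z')\in U_{\alpha_0}$ and $b\in W_{\alpha_0}$, so $z,z'\in E\cap(U_{\alpha_0}\times W_{\alpha_0})\subseteq V_{z_{\alpha_0}}\in\mathcal V$, giving $g\in B(f,\mathcal V)$. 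I expect the main obstacle to be this second construction: producing a single continuous $\varepsilon$ whose value at $z$ both bounds a fibred $d_E$-ball around $z$ and keeps that ball inside one element of $\mathcal V$. The key trick is to pull the distance $d(\cdot,X\setminus U_\alpha)$ back through $\pi_X$ and blend these via a partition of unity subordinate to a locally finite rectangular refinement, exploiting the fact that $f$ and $g$ being fibre preserving confines $z,z'$ to a common fibre and hence to a common $W_{\alpha_0}$.
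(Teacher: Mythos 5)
Your proof is correct, and for the substantive direction it takes a genuinely different route from the paper. The easy direction (from $\varepsilon$ back to a cover) is essentially the same in both: your $V_z$'s control both the $d_E$-distance to $z$ and the variation of $\varepsilon$, exactly as the paper's sets $W_z$ do. For the hard direction (given a cover $\mathcal V$, produce $\varepsilon$ with $B_{d_E}(f,\varepsilon)\subset B(f,\mathcal V)$), the paper works upstairs in $X\times B$: it star-refines $r^{-1}(\mathcal V)$ by a locally finite cover $\mathcal W$, uses local compactness and $\sigma$-compactness of $X$ plus a tube-lemma argument over an exhaustion $\{\overline X_i\}$ to manufacture pseudo-metrics $\sigma_i$ on $B$, forms $\rho=d+\sum_i\sigma_i/2^i$, sets $\varepsilon(z)=\rho\big(z,E\setminus \mathrm{St}(z,\mathcal W)\big)$ (proving positivity by a sequence argument), and only at the end observes that the $\sigma$-component vanishes for fiber-preserving maps, so that $B_\rho(f,\varepsilon)=B_{d_E}(f,\varepsilon)$. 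You instead refine $\mathcal V$ by rectangles $U_\alpha\times W_\alpha$, blend the pulled-back distance functions $d(\pi_X(\cdot),X\setminus U_\alpha)$ via a partition of unity on $E$, and use the convex-combination-bounded-by-a-maximum observation to force $\pi_X(g(y))\in U_{\alpha_0}$; the fiber-preserving hypothesis is invoked exactly once, to put $f(y)$ and $g(y)$ over the same point of $W_{\alpha_0}$. Your construction dispenses with the star-refinement, the exhaustion of $X$, and the auxiliary pseudo-metrics on $B$; local compactness of $X$ enters only through the paracompactness of $X\times B$ (hence of $E$), a fact both proofs use without further comment. The trade-off is that the paper's heavier construction of $\sigma$ and $\rho$ is recycled later (the proof of Proposition \ref{approximation} explicitly says ``as in the proof of Lemma \ref{limitation}''), whereas your argument, while shorter and more transparent, proves only the stated equivalence.
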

\begin{proof}
Suppose $U\subset C_P(K\times B,E)$ is open with respect to the limitation topology and $f\in U$. Then there is an open cover $\mathcal V$ of $E$ such that $U$ contains the set
$B(f,\mathcal V)$. Since $X\times B$ is paracompact, so is $E$.
Let $r:X\times B\to E$ be a fiber-preserving retraction and $\mathcal W$ be a locally finite open cover of $X\times B$ which is star-refinement of the cover $\widetilde{\mathcal V}=r^{-1}(\mathcal V)$. We fix a countable open cover $\{X_i\}_{i\geq 1}$ of $X$ such that each $X_i$ has a compact closure and $\overline X_i\subset X_{i+1}$. %For every $i$ consider the space $\Lambda_i=\overline X_i\times B$.
Since $\overline X_i$ is compact, every $b\in B$ has a neighborhood $O_b^i$ in $B$ such that for every $x\in\overline X_i$ there is $W_x\in\mathcal W$ and a neighborhood $O_x$ of $x$ in $X$ with
$O_x\times O_b^i\subset W_x$.
Choose a bounded continuous pseudo-metric $\sigma_i\leq 1$ on $B$ such that the family of all open balls $\{B_{\sigma_i}(b,1):b\in B\}$ refines the cover
$\{O_b^i:b\in B\}$. Let
$\rho$ be the pseudo-metric on $X\times B$ defined by $\rho((x,b),(x',b'))=d(x,x')+\sigma(b,b')$, where
$\sigma(b,b')=\sum_{i=1}^\infty\sigma_i(b,b')/2^i$.
Define a function $\varepsilon$ on $E$ by
$$\varepsilon(z)=\rho(z,E\backslash\rm{St}(z,\mathcal W)).$$  We can assume that $\varepsilon\leq 1$. Clearly $\varepsilon$ is continuous and bounded, and it is positive. Indeed,
suppose $\varepsilon(z)=0$ for some $z=(x,b)\in E$, where $x\in X_j$. Then there is a sequence $\{z_n\}\subset E\backslash\rm{St}(z,\mathcal W)$, $z_n=(x_n,b_n)$, such that $\rho(z,z_n)<1/n$ for all $n$. Then $d(x,x_n)<1/n$ and, since $d$ is a metric on $X$, the sequence $\{x_n\}$ converges to $x$. We can assume that each $x_n\in X_j$. On the other hand, $\sigma(b,b_n)<1/n$ implies that $\sigma_j(b,b_n)<2^j/n$ for all $n$. Hence, there exists $n_0$ with $2^j/n<1$ for every $n\geq n_0$. Consequently, $b,b_n\in O_{b^*}^j$ for any $n\geq n_0$ and some $b^*\in B$. According to the construction of the neighborhoods $O_b^j$, we have $O_x\times O_{b^*}^j\subset W_x$ for some $W_x\in\mathcal W$ and a neighborhood $O_x$ of $x$. Now, take $m>n_0$ with $x_m\in O_x$, and observe that $z, z_m\in O_x\times O_{b^*}^j\subset W_x$. Hence, $z_m\in\rm{St}(z,\mathcal W)$, a contradiction.
Moreover, for any $z\in E$ the inequality
$\rho(z,z')<\varepsilon(z)$ implies that $z,z'\in W$ for some $W\in\mathcal W$. Hence, $B_{\rho}(f,\varepsilon)\subset B(f,\rm{St}\mathcal W)\subset B(f,\mathcal V)$. Because $\displaystyle B_{d_E}(f,\varepsilon)=B_{\rho}(f,\varepsilon)$, we finally have
$\displaystyle B_{d_E}(f,\varepsilon)\subset U$.
%Finally, the function $\varepsilon=\varepsilon_X\circ\pi_X:E\to (0,\infty)$ has the  required property.

To show the other implication of Lemma \ref{limitation}, let $\displaystyle B_{d_E}(f,\varepsilon)\subset U$ for some positive continuous function $\varepsilon$. For every $z\in E$ let
$$W_z=\{e\in E:d_E(e,z)<\varepsilon(z)/4{~}\mbox{and}{~}\varepsilon(z)/2<\varepsilon(e)<3\varepsilon(z)/2\}.$$ Then, for the open cover
$\mathcal V=\{W_z:z\in E\}$ of $E$
we have $B(f,\mathcal V)\subset\displaystyle B_{d_E}(f,\varepsilon)$.
\end{proof}

We need to consider another topology on function spaces $C(X,Y)$, the so called {\em source limitation topology} \cite{bv}. The local base in that topology at given $h:X\to Y$  consists of all sets
$$\Lambda(h,\eta)=\{h'\in C(X,Y):\rho(h(x),h'(x))<\eta(x){~} \forall x\in X\},$$ where $\rho$ is a continuous pseudo-metric on $Y$ and $\eta$ is a continuous and positive function on $X$. This topology, called sometimes the fine topology, was initially introduced for metrizable spaces $Y$ (in this case $\rho$ is a fixed compatible metric on $Y$), see \cite{mc}, \cite{mu}. Clearly, the source limitation topology is stronger than the limitation one. This topology has the Baire property provided $X$ is paracompact and $Y$ is completely metrizable, see \cite{mu}.

\begin{pro}\label{baire}
Let $p:E\to B$ be a  locally compact $ANR$-fibration.
Then for any compact space $K$ the space $C_P(K\times B,E)$ has the Baire property.
\end{pro}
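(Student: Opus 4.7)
My plan is to verify the Baire property directly. Given a nonempty open set $W_0\subset C_P(K\times B,E)$ and a sequence $W_1,W_2,\ldots$ of open dense subsets, I would construct a map $f\in W_0\cap\bigcap_{n\geq 1}W_n$ as the uniform limit of an inductively chosen Cauchy-type sequence. First I would fix a locally finite open cover $\{U_\lambda\}_{\lambda\in\Lambda}$ of $B$ trivializing $p$, and for each $\lambda$ a closed embedding $p^{-1}(U_\lambda)\hookrightarrow X_\lambda\times U_\lambda$ where $(X_\lambda,d_\lambda)$ is a locally compact separable metric $ANR$ equipped with a bounded complete compatible metric (available since $X_\lambda$ is Polish). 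Let $d_{E,\lambda}$ denote the pseudo-metric on $p^{-1}(U_\lambda)$ pulled back from $d_\lambda$ via $\pi_{X_\lambda}$. Then I would inductively choose $f_n\in W_0\cap W_1\cap\cdots\cap W_n$ together with open covers $\mathcal V_n$ of $E$ so that (i)~$B(f_n,\mathcal V_n)\subset W_0\cap\cdots\cap W_n$, (ii)~$\mathcal V_{n+1}$ star-refines $\mathcal V_n$, (iii)~$f_{n+1}\in B(f_n,\mathcal V_n)$, and (iv)~on each trivial piece $p^{-1}(U_\lambda)$ the cover $\mathcal V_n$ has mesh at most $2^{-n}$ in $d_{E,\lambda}$. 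Step~(iii) uses density of $W_{n+1}$ inside the limitation-topology interior of $B(f_n,\mathcal V_n)$, while step~(iv) is arranged by applying Lemma~\ref{limitation} locally to replace basic neighborhoods on each trivial piece by pseudo-metric balls and then using paracompactness of $E$ to globalize the star-refinement.

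By (iii) and (iv), for each $\lambda$ the sequence $\pi_{X_\lambda}\circ f_n\colon K\times U_\lambda\to X_\lambda$ is uniformly $d_\lambda$-Cauchy, hence converges uniformly (here I use that $K$ is compact, so uniform convergence on $K\times U_\lambda$ yields a continuous limit) to a continuous $\widetilde f_\lambda$. These limits agree on overlaps, as pointwise limits of the single sequence $f_n$ read through the respective trivializations, and take values in the closed subset $p^{-1}(U_\lambda)\subset X_\lambda\times U_\lambda$, so they assemble to a continuous fiber-preserving map $f\colon K\times B\to E$. Since $f\in B(f_n,\mathcal V_n)\subset W_0\cap\cdots\cap W_n$ for every $n$, we obtain $f\in W_0\cap\bigcap_n W_n$, proving density of the countable intersection.

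The principal technical obstacle will be condition~(iv): producing a single open cover $\mathcal V_n$ of $E$ whose trace on each of the potentially uncountably many trivial pieces $p^{-1}(U_\lambda)$ has small $d_{E,\lambda}$-mesh, while simultaneously star-refining $\mathcal V_{n-1}$ and witnessing $f_n\in W_0\cap\cdots\cap W_n$. Lemma~\ref{limitation} is essential here, as it translates limitation-topology neighborhoods on each trivial piece into pseudo-metric balls governed by a continuous positive function on $E$; paracompactness of $E$ then permits a simultaneous locally finite star-refinement that meets all four requirements at once. Once (iv) is secured, compactness of $K$ together with completeness of each $d_\lambda$ makes the passage to the continuous fiber-preserving limit routine, and the fiber-preserving property is preserved under pointwise limits automatically.
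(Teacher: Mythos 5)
Your argument is correct in outline but takes a genuinely different route from the paper. The paper does not run the Baire category argument by hand: it first invokes the fiber-preserving closed embedding of $E$ into $Q\times[0,\infty)\times B$ from \cite[Proposition A9.4]{tw} to reduce to the globally trivial case $E=X\times B$ with $X$ locally compact separable and completely metrizable, and then shows that $\theta:f\mapsto\pi_X\circ f$ is a homeomorphism of $C_P(K\times B,E)$ with the limitation topology onto $C(K\times B,X)$ with the source limitation topology, so the Baire property is inherited from the known Baire-ness of the source limitation (fine) topology on maps from a paracompact space into a completely metrizable one. Compactness of $K$ enters there to make each $f$ perfect, so that a positive function on $K\times B$ can be pushed forward to $E$. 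Your approach dispenses with both the global embedding and the cited Baire theorem and instead reproves the fine-topology Baire argument directly in the fibered setting over a local trivialization; this is more self-contained and more elementary, at the cost of carrying all the star-refinement bookkeeping yourself.

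Two points in your sketch need repair before it closes. First, condition (iv) as literally stated is in general unachievable: if $e$ lies on the frontier of $p^{-1}(U_\lambda)$ in $E$, then $\pi_{X_\lambda}$ read through the closed embedding has no continuous extension beyond $p^{-1}(U_\lambda)$, so the trace on $p^{-1}(U_\lambda)$ of \emph{every} neighborhood of $e$ may have large $d_{E,\lambda}$-diameter; no open cover of $E$ can have small $d_{E,\lambda}$-mesh on all of $p^{-1}(U_\lambda)$. The standard repair is to shrink $\{U_\lambda\}$ to a locally finite open cover $\{U'_\lambda\}$ with $\overline{U'_\lambda}\subset U_\lambda$ and require only that each $V\in\mathcal V_n$ meeting $p^{-1}(\overline{U'_\lambda})$ be contained in $p^{-1}(U_\lambda)$ with $d_{E,\lambda}$-diameter at most $2^{-n}$; this is achievable because every point of $p^{-1}(\overline{U'_\lambda})$ lies in the open set $p^{-1}(U_\lambda)$ where $d_{E,\lambda}$ is continuous and local finiteness leaves only finitely many constraints per point, and the Cauchy estimate survives for $y\in K\times U'_\lambda$ since $f_n$ and $f_{n+1}$ are fiber-preserving. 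Second, with (i)--(iii) as written the limit $f$ is only guaranteed to be $\mathrm{St}\,\mathcal V_n$-close (up to closures) to $f_n$, not $\mathcal V_n$-close; strengthen (i) to $B(f_n,\mathrm{St}\,\mathrm{St}\,\mathcal V_n)\subset W_0\cap\cdots\cap W_n$, which is always arrangeable by passing to an iterated star-refinement using paracompactness of $E$. With these adjustments your proof goes through.
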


\begin{proof}
According to \cite[Proposition A9.4]{tw}, there is a closed fiber-preserving embedding of $E$ into $Q\times [0,\infty)\times B$.
Let $C_P(K\times B,Q\times [0,\infty)\times B)$ be the set of all fiber-preserving maps into $Q\times [0,\infty)\times B$. Then $C_P(K\times B,E)$
is a closed subset of $C_P(K\times B,Q\times [0,\infty)\times B)$ with respect to the limitation topology. So, it suffices to show that $C_P(K\times B,Q\times [0,\infty)\times B)$ equipped with the limitation topology has the Baire property. Therefore, we can suppose that $E=X\times B$ and $p:E\to B$ is the projection, where $X$ is a locally compact separable metric space with a fixed complete metric $d$.
Observe that the correspondence $\theta:f\rightsquigarrow f_X=\pi_X\circ f$ between $C_P(K\times B,E)$ and $C(K\times B,X)$ is bijective.
Since $C(K\times B,X)$ with the source limitation topology has the Baire property, the proof is reduced to showing that $\theta$ is a homeomorphism when $C_P(K\times B,E)$ carries the limitation topology and $C(K\times B,X)$ is equipped with the source limitation topology.
Let show that both $\theta^{-1}$ and $\theta$ are continuous.

Suppose $\{f_X^\alpha\}$ is a net in $C(K\times B,X)$ converging with respect to the source limitation to $f_X=\pi_X\circ f$ for some
$f\in C_P(K\times B,E)$. To show that $\{f^\alpha\}$ converges to $f$, let $U$ be a neighborhood of $f$ in $C_P(K\times B,E)$. Then, by
Lemma \ref{limitation}, there exists a continuous positive function $\varepsilon$ on $E$ such that
$\displaystyle B_{d_E}(f,\varepsilon)\subset U$.
Then there is $\alpha_0$ such that $d(f_X(z),f_X^\alpha(z))<\varepsilon(f(z))$ for each $z\in K\times B$ and $\alpha\geq\alpha_0$. This implies $f^\alpha\in B_{d_E}(f,\varepsilon)\subset U$ for all $\alpha\geq\alpha_0$. Hence, $\theta^{-1}$ is continuous.

Suppose now that $\{f^\alpha\}$ converges to $f$ in $C_P(K\times B,E)$ with respect to the limitation topology, and let $\varepsilon'$ be a continuous positive function on $K\times B$. Since $f$ is a perfect map, we find a continuous positive function $\varepsilon''$ on $f(K\times B)$ such that $\varepsilon''(f(z))<\varepsilon'(z)$, $z\in K\times B$, (see the proof of Claim 4.2 below) and then extend $\varepsilon''$ to a continuous positive function $\varepsilon$ on $E$ (this can be done because $E$ is paracompact and $f(K\times B)$ is closed in $E$).
%Take a continuous pseudo-metric $\sigma$ on $B$ and consider the pseudo-metric $\rho$ on $X\times B$, $\rho((x,b),(x',b'))=\sigma (b,b')+d(x,x')$.
The set $\displaystyle B_{d_E}(f,\varepsilon)$ is a neighborhood of $f$ with respect to the limitation topology,  see the proof of Lemma \ref{limitation}.
 Therefore, there is $\alpha_0$ with $f^\alpha\in\displaystyle B_{d_E}(f,\varepsilon)$ for all $\alpha\geq\alpha_0$. This means that $d(f_X(z),f_X^\alpha(z))<\varepsilon(f(z))<\varepsilon'(z)$ for all
$\alpha\geq\alpha_0$ and $z\in K\times B$. Hence, $\theta$ is continuous.
\end{proof}

For a space $Y$ let $\mathcal F(Y)$ denote the family of all non-empty closed subsets of $Y$. If $Y$ is a convex subspace of a linear space, then $\mathcal F_c(Y)$ stands for the closed convex subsets of $Y$. A set-valued map $\varphi:X\to\mathcal F(Y)$ is said to be lower semi-continuous, or l.s.c., if
$\varphi^{-1}(U)=\{x\in X:\varphi(x)\cap U\neq\varnothing\}$ is open in $X$ for every open $U\subset Y$.

\begin{pro}\label{approximation}
Let $p:E\to B$ be a locally compact $ANR$-fibration. Then the fibration $p':E\times Q\to B$ has the \rm{FGP}-property.
\end{pro}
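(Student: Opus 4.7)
The plan is to reduce to a local trivialization of $p$ and then perturb only the Hilbert-cube component of the maps into $p'^{-1}(U)=p^{-1}(U)\times Q$, exploiting the product structure of $Q$ to secure fiberwise disjointness within a prescribed tolerance. Since the FGP-property is defined via an open cover of $B$, it suffices to prove that $p'^{-1}(U)$ has the fibred disjoint $n$-disks property for every $n$ and every $U$ in a local trivialization $\mathcal{U}$ of $p$. Fix such $U$; by the definition of a trivial $ANR$-fibration together with \cite[Proposition A9.4]{tw}, one may embed $p'^{-1}(U)$ fiber-preservingly in $X\times Q\times U$, where $X$ is the associated locally compact separable metric $ANR$-space. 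Given fiber-preserving $f,g:\mathbb{I}^n\times U\to p'^{-1}(U)$, decompose $f=(f_X,f_Q,\pi_U)$ and $g=(g_X,g_Q,\pi_U)$, and aim to leave $f_X,g_X$ unchanged while producing $f'_Q,g'_Q:\mathbb{I}^n\times U\to Q$ whose images $f'_Q(\mathbb{I}^n\times\{b\})$ and $g'_Q(\mathbb{I}^n\times\{b\})$ are disjoint in $Q$ for every $b\in U$; this forces $f'=(f_X,f'_Q,\pi_U)$ and $g'=(g_X,g'_Q,\pi_U)$ to have disjoint images over each fiber.

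By Lemma \ref{limitation} and the compactness of $\mathbb{I}^n$, the closeness constraint in the limitation topology reduces to a continuous positive function $\delta:U\to(0,1]$ bounding the allowed $d_Q$-distance of the perturbation. Equip $Q=[0,1]^\omega$ with the weighted metric $d_Q((x_i),(y_i))=\sum_i 2^{-i}|x_i-y_i|$. Choose a locally finite open cover $\{V_m\}$ of $U$ with $\delta>\delta_m>0$ on $V_m$; by the shrinking lemma and Urysohn's lemma applied to the paracompact Hausdorff space $U$, obtain continuous $\phi_m:U\to[0,1]$ with $\mathrm{supp}\,\phi_m\subset V_m$ and with the open sets $\{\phi_m=1\}$ still covering $U$. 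Pick pairwise distinct positive integers $k_m$ large enough that $\sum_m 2^{-k_m}\phi_m(b)<\delta(b)$ for every $b\in U$, and define $f'_Q,g'_Q$ by altering only the $k_m$-th coordinates:
\[
f'_{Q,k_m}(x,b)=(1-\phi_m(b))\,f_{Q,k_m}(x,b),\qquad g'_{Q,k_m}(x,b)=(1-\phi_m(b))\,g_{Q,k_m}(x,b)+\phi_m(b),
\]
leaving all other coordinates unchanged. Closeness follows from the weighted-tail estimate. For fiberwise disjointness: at each $b$ pick $m^*$ with $\phi_{m^*}(b)=1$; then the $k_{m^*}$-th coordinate of $f'_Q(\cdot,b)$ is identically $0$ while that of $g'_Q(\cdot,b)$ is identically $1$, forcing the images to be disjoint in $Q$.

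I expect the main obstacle to be the simultaneous calibration of $\{V_m\}$, $\{\phi_m\}$ and $\{k_m\}$ on the paracompact base $U$: disjointness needs some $\phi_{m^*}(b)=1$ at every point, arranged by a shrinking step on the cover before producing the Urysohn functions, while the closeness estimate needs the indices $k_m$ to grow fast enough to dominate the local multiplicity of $\{V_m\}$ against the prescribed $\delta$. Both are standard in spirit, but dovetailing them into a single construction that produces a simultaneous fiber-preserving approximation of $f$ and $g$ is the one step where care is required.
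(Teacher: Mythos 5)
Your overall strategy---keep the $E$-coordinate of the maps fixed and perturb only the $Q$-coordinate---is exactly the paper's, but your implementation of the perturbation is genuinely different. The paper obtains the perturbed $Q$-part nonconstructively: it extends everything over the \v{C}ech--Stone compactification, cuts down to the $\sigma$-compact set where the extended tolerance function is positive, writes that set as an increasing union of compacta $K_i$, shows via Michael's selection theorem \cite{m} that the restriction maps $C(Y,Q)\to C(K_i,Q)$ are open and surjective in the source limitation topology, and intersects the resulting dense open sets of ``maps disjoint over $K_i$'' using the Baire property. You instead build the perturbation explicitly, reserving one coordinate of $Q=[0,1]^\omega$ for each member of a locally finite cover of the base and pushing the two maps to the levels $0$ and $1$ in that coordinate. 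Your reduction of the limitation-topology tolerance to a positive continuous function $\delta$ of the base point alone (via Lemma~\ref{limitation} and minimizing over the compact factor) is sound, and the fiberwise---rather than global---disjointness of the $Q$-parts that your construction yields is enough, since the maps are fiber-preserving. Where it works, your argument is more elementary and avoids the Baire-category machinery of Proposition~\ref{baire} entirely.

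The gap is the implicit countability of the family $\{V_m\}$. The base is only assumed paracompact Hausdorff, so a locally finite open cover subordinated to $\delta$ may be uncountable; then ``pairwise distinct positive integers $k_m$'' cannot be chosen, and the series $\sum_m 2^{-k_m}\phi_m(b)$ that drives your closeness estimate is not even defined. This is precisely the difficulty the paper's \v{C}ech--Stone detour is engineered to bypass (it manufactures $\sigma$-compactness, hence countability, out of thin air). Your construction can be repaired: pass to an open refinement that is simultaneously locally finite and $\sigma$-discrete, $\bigcup_n\mathcal V_n$ (available for paracompact spaces, cf.\ \cite{en}), further refined into the level sets $\{2^{-j-1}<\delta<2^{-j+1}\}$; assign a single coordinate $k_{n,j}$ of $Q$ to all members of $\mathcal V_n$ lying in the $j$-th level set. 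Since members of a discrete family have disjoint closures, their bump functions add up to a single continuous $\phi_{n,j}$, every point $b$ meets only levels with $2^{-j-1}<\delta(b)$, and choosing $2^{-k_{n,j}}$ to decay fast enough in $n$ and $j$ restores both the disjointness at each $b$ and the bound $\sum_{n,j}2^{-k_{n,j}}\phi_{n,j}(b)<\delta(b)$. As written, however, your proof is complete only for Lindel\"of bases.
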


\begin{proof}
Without loss of generality, we may assume that $p$ is a trivial fibration.
We need to show that every fiber preserving map $f:Q\times\{1,2\}\times B\to E\times Q$ can be approximated in the limitation topology by
maps $g\in C_P(Q\times\{1,2\}\times B,E\times Q)$ such that $g(Q\times\{1\}\times B)\cap g(Q\times\{2\}\times B)=\varnothing$.
Here $C_P(Q\times\{1,2\}\times B,E\times Q)$ is the set of all $p'$-preserving maps from $C(Q\times\{1,2\}\times B,E\times Q)$.
To this end, we fix $f\in C_P(Q\times\{1,2\}\times B,E\times Q)$ and an open cover $\mathcal U$ of $E\times Q$. Let $f_E, f_Q$ be the maps $\pi_E\circ f$ and $\pi_Q\circ f$, where $\pi_E$ and $\pi_Q$ are the projections of $E\times Q$ onto $E$ and $Q$, respectively.
Since $E$ is paracompact, so is
$E\times Q$. %Hence, there is a locally finite open cover $\mathcal V$ of $E\times Q$ which is a star-refinement of $\mathcal U$.
Using that $Q$ is compact, every $e\in E$ has a neighborhood $O_e$ such that for every $x\in Q$ there is $V\in\mathcal U$ with $O_e\times\{x\}\subset V$. As in the proof of Lemma \ref{limitation}, we can find a bounded continuous pseudo-metric $\sigma$ on $E$ and a continuous bounded positive function
$\varepsilon$ on $E\times Q$ such that any $g\in C_P(Q\times\{1,2\}\times B,E\times Q)$ is $\mathcal U$-close to $f$ provided
$\rho(f(y),g(y)<\varepsilon(f(y))$ for all $y\in Q\times\{1,2\}\times B$, where $\rho$ be the pseudo-metric on $E\times Q$ defined by $\rho((e,x),(e',x'))=d(x,x')+\sigma(e,e')$ (here $d$ is the ordinary metric on $Q$).
%such that the family of all open balls $\{B_\sigma(e,1):e\in E\}$ refines the cover
%$\{O_e:e\in E\}$, and let
%$\rho$ be the pseudo-metric on $E\times Q$ defined by $\rho((e,x),(e',x'))=d(x,x')+\sigma(e,e')$ (here $d$ is the ordinary metric on $Q$).
%Then the function $\varepsilon(z)=\rho(z,(E\times Q)\backslash\rm{St}(z,\mathcal V))$ on $E\times Q$ is continuous, bounded and positive. It follows from the proof of Lemma \ref{limitation} that any $g\in C_P(Q\times\{1,2\}\times B,E\times Q)$ is $\mathcal U$-close to $f$ provided
%$\rho(f(y),g(y)<\varepsilon(f(y))$ for all $y\in Q\times\{1,2\}\times B$.

We are going to show there is a function $g_Q:Q\times\{1,2\}\times B\to Q$ such that $g_Q(Q\times\{1\}\times B)\cap g_Q(Q\times\{2\}\times B)=\varnothing$ and
$d(f_Q(y),g_Q(y))<\varepsilon(f(y))$ for all $y\in Q\times\{1,2\}\times B$.
Since $\varepsilon$ is bounded, there is a continuous extension $\widetilde\varepsilon:\beta(E\times Q)\to [0,\infty)$, where $\beta(E\times Q)$
is the \v{C}ech-Stone extension of $E\times Q$. Let also $\widetilde f:\beta(Q\times B)\times\{1\}\oplus\beta(Q\times B)\times\{2\}\to\beta(E\times Q)$ and
$\widetilde\pi_Q:\beta(E\times Q)\to Q$ be the extensions of $f$ and $\pi_Q$.  (We use $\oplus$ to denote the disjoint union.) Then $\widetilde E=\widetilde\varepsilon^{-1}(0,\infty)$ and $Y=\widetilde f^{-1}(\widetilde E)$ are locally compact and $\sigma$-compact spaces. We represent $Y$ as the union of the sets
    $K_i=Y_i\times\{1\}\oplus Y_i\times\{2\}$, where $Y_i$ is an increasing sequence of compact subspaces of $\beta(Q\times B)$. Let $\delta=\widetilde\varepsilon\circ\widetilde f:Y\to (0,\infty)$ and $\widetilde f_Q=\widetilde\pi_Q\circ\widetilde f:Y\to Q$. We consider the space $C(Y,Q)$ with the source limitation topology.
%called sometimes the fine topology, see \cite{mc}, \cite{mu}.
Recall that the local base in that topology at given $h:Y\to Q$  consists of all sets
$$\Lambda(h,\eta)=\{h'\in C(Y,Q):d(h(y),h'(y))<\eta(y){~} \forall y\in Y\},$$ where $\eta$ is a continuous and positive function on $Y$.
As we already noted, this topology has the Baire property.
\begin{claim}
All restriction maps $\theta_i:C(Y,Q)\to C(K_i,Q)$, $\theta_i(h)=h|K_i$, are surjective and open in the source limitation topology.
\end{claim}
Indeed, if $W\subset C(Y,Q)$ is open let $h'\in\theta_i(W)$. Then $h'=\theta_i(h)$ for some $h\in W$ and there is
$\eta\in C(Y,(0,\infty)$ with $\Lambda(h,\eta)\subset W$. Consider the open in $C(K_i,Q)$ set
$$\Lambda_i(h',\eta/2)=\{g\in C(K_i,Q):d(h'y),g(y))<\eta(y)/2{~} \forall y\in K_i\}.$$
If $g\in\Lambda_i(h',\eta/2)$, we define the lower semi-continuous set valued map $\Phi:Y\to\mathcal F_c(Q)$,
$\Phi(y)=\{z\in Q:d(z,h(y)\leq\eta(y)/2\}$ if $y\not\in K_i$ and $\Phi(y)=g(y)$ if $y\in K_i$. Then by Michael's selection theorem \cite{m},
there is a continuous function $\widetilde g\in C(Y,Q{\color{blue})}$ with $\widetilde g(y)\in\Phi(y)$ for all $y\in Y$. Obviously,
$\widetilde g\in\Lambda(h,\eta)\subset W$ and $\theta_i(\widetilde g)=g$. This means that $\Lambda_i(h',\eta/2)\subset\theta_i(W)$. Hence,
each $\theta_i$ is open. Surjectivity of $\theta_i$ is obvious because $Q$ is an absolute retract for the normal spaces.

Now, for every $i$ let $G_i$ be the set of all maps $h\in C(K_i,Q)$ such that $h(Y_i\times\{1\})\cap h(Y_i\times\{2\})=\varnothing$. Using that $K_i$ are compact, one can show that  each $G_i$ is open and dense in
$C(K_i,Q)$ (note that the source limitation topology on $C(K_i,Q)$ coincides with the compact open topology). Hence,
$G=\bigcap_{i=1}^\infty\theta_i^{-1}(G_i)$ is dense in $C(Y,Q)$ with respect to the source limitation topology. Since $\Lambda(\widetilde f_Q,\delta)$ is open in that topology, there is $\widetilde g_Q\in G\cap\Lambda(\widetilde f_Q,\delta)$. Let $g_Q=\widetilde g_Q|(Q\times\{1,2\}\times B)$ and define
$g:Q\times\{1,2\}\times B\to E\times Q$ by $g(y)=(f_E(y),g_Q(y))$. Clearly, $g$ is $p'$-preserving and
$\rho(f(y),g(y))=d(f_Q(y),g_Q(y))<\varepsilon(f(y))$ for all $y\in Q\times\{1,2\}\times B$. This means that $g$ is $\mathcal U$-close to $f$. Moreover, $\widetilde g_Q\in G$ implies $g(Q\times\{1\}\times B)\cap g(Q\times\{2\}\times B)=\varnothing$.
\end{proof}

\section{Homological characterization of $Q$-manifold bundles over $C$-spaces}
In this section we prove a homological characterization of $Q$-manifold bundles over $C$-spaces. This provides a partial answer to Question QM22 from \cite{w}.

The $C$-space property was originally defined by Haver \cite{ha} for compact metric spaces. Addis and Gresham \cite{ag} reformulated Haver's definition for arbitrary spaces: A space $X$ has property $C$ if for every sequence $\{\mathcal U_n\}$ of open covers of $X$ there exists a sequence
    $\{\mathcal V_n\}$ of open disjoint families in $X$ such that each $\mathcal V_n$ refines $\mathcal U_n$ and $\bigcup_{n\geq 1}\mathcal V_n$ is a cover of $X$. Every finite-dimensional paracompact space, as well as every countable-dimensional (a countable union of finite-dimensional sets) metric space, is a $C$-space \cite{ag}, but there is a compact metric $C$-space which is not countable-dimensional \cite{po}. On the other hand, normal $C$-spaces are weakly infinite-dimensional in the sense of Alexandroff \cite{ag}. We say that $X$ is an {\em hereditary $C$-space} if every open subset of $X$ is a $C$-space. For example, every paracompact $C$-space whose open sets are $F_\sigma$ is an hereditary $C$-space.

Although not formulated in this form, Lemma \ref{gv} below was actually established in \cite[Proposition 3.1]{gv}.
\begin{lem}\cite{gv}\label{gv}
Let $Y$ be a closed convex subset of a Banach space and $V\subset Y$ be open. Suppose $X$ is a paracompact $C$-space and $\varphi:X\rightsquigarrow\mathcal F_c(Y)$ is l.s.c. such that $\varphi(x)\subset V$ for all $x\in X$. Then for every set-valued map
$\psi:X\to\mathcal F(V)$ with a closed graph (in $X\times V$) there is a map $f:X\to V$ with $f(x)\in\varphi(x)\backslash\psi(x)$ for all $x\in X$ provided each $\varphi(x)\cap\psi(x)$ is a $Z$-set in $\varphi(x)$.
\end{lem}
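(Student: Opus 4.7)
The plan is to combine Michael's selection theorem for l.s.c.\ convex-valued maps with an inductive perturbation scheme powered by the $C$-space property of $X$. Two observations organize the argument. First, since $\varphi\colon X\rightsquigarrow\mathcal F_c(Y)$ is l.s.c.\ with closed convex values in a Banach space, Michael's theorem produces continuous selections of $\varphi$ and, more flexibly, of any sub-map $x\mapsto\varphi(x)\cap\overline{B}(f(x),\eta(x))$ that remains l.s.c.\ with nonempty values; and since each $\varphi(x)$ is convex, convex combinations of two selections of $\varphi$ are again selections of $\varphi$. Second, the hypothesis that $\varphi(x)\cap\psi(x)$ is a $Z$-set in $\varphi(x)$ forces $\varphi(x)\setminus\psi(x)$ to be dense in $\varphi(x)$, so arbitrarily small fiberwise moves can avoid $\psi$; and because $\psi$ has closed graph in $X\times V$, once $f(x)\notin\psi(x)$ with a positive graph-distance margin on a closed subset of $X$, that condition persists under sufficiently small continuous perturbations of $f$ there.

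Starting from a Michael selection $f_{0}\colon X\to V$ of $\varphi$, I would construct inductively selections $f_{n}\colon X\to V$ of $\varphi$ together with an increasing sequence of closed sets $A_{n}\subset X$ on which $f_{n}$ avoids $\psi$ with a positive continuous graph-distance margin, controlled by a summable sequence of continuous positive functions. At stage $n$, for each base point $x_{0}$ I use the $Z$-set density to pick $v\in\varphi(x_{0})\setminus\psi(x_{0})$ close to $f_{n-1}(x_{0})$, then spread it over an open neighborhood $U$ by applying Michael's theorem to $x\mapsto\varphi(x)\cap\overline{B}(v,\varepsilon)$, obtaining a local selection $g_{U}$ of $\varphi\setminus\psi$ with positive graph-distance from $\psi$ on $U$. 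Applying the $C$-space property to the resulting sequence of covers yields pairwise disjoint open families $\mathcal V_{n}$ with $\bigcup_{n}\mathcal V_{n}$ covering $X$; on each $V\in\mathcal V_{n}$, convexity of $\varphi$ allows me to blend $g_{V}$ with $f_{n-1}$ by a Urysohn convex combination $\lambda(x)g_{V}(x)+(1-\lambda(x))f_{n-1}(x)$, which remains a selection of $\varphi$, and disjointness within $\mathcal V_{n}$ ensures these replacements do not interfere. The uniform limit $f=\lim_{n}f_{n}$ exists by summability of the controls, continuously selects $\varphi$, and avoids $\psi$ everywhere since each $x\in X$ belongs to some $V\in\mathcal V_{n}$ and was placed off $\psi$ with a positive margin at that stage.

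The main technical obstacle is the scheduling of the control functions so that the margins accumulated at early stages survive all later stages: once $f_{n}$ avoids $\psi$ at continuous graph-distance $\mu_{n}(x)>0$ on $A_{n}$, every subsequent perturbation must remain strictly smaller than $\mu_{n}(x)$ there, while still being large enough to accomplish fresh avoidance outside $A_{n}$. Since $\mu_{n}$ depends on all prior choices, the control for stage $n{+}1$ can be fixed only after $f_{n}$, $A_{n}$, and the family $\mathcal V_{n+1}$ are in hand; the $C$-space hypothesis is precisely what makes this essentially countable, sequential scheduling possible by decomposing $X$ into countably many disjoint open families that can be processed one at a time, replacing the finite-dimensionality argument one would otherwise need to invoke.
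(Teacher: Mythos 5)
You should first note that the paper does not prove this lemma at all: it is imported verbatim as \cite[Proposition 3.1]{gv}, so the only comparison available is with Gutev--Valov's argument, which is structurally quite different from yours --- they do not run an inductive perturbation scheme, but reduce the statement to a selection theorem for set-valued maps over paracompact $C$-spaces with aspherical values (Uspenskij-type), the role of the $Z$-set hypothesis being to guarantee that $\varphi(x)\setminus\psi(x)$ is homotopy dense in the convex, hence contractible, set $\varphi(x)$, and therefore aspherical.

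Your proposal, as written, has a genuine gap, and it is exactly the one you flag in your last paragraph and then wave away. The $C$-space property, in the Addis--Gresham form used throughout this paper, takes as input the \emph{entire} sequence of open covers $\{\mathcal U_n\}$ at once and only then returns the disjoint families $\{\mathcal V_n\}$. Your construction needs the cover at stage $n+1$ (and the size of the admissible local selections $g_U$ over its elements) to depend on $f_n$, on the secured set $A_n$, and on the margin function $\mu_n$ --- data that exist only after stage $n$ has been carried out, which in turn already consumed $\mathcal V_1,\dots,\mathcal V_n$. Asserting that ``the $C$-space hypothesis is precisely what makes this sequential scheduling possible'' begs the question: adaptive (game-theoretic) use of property $C$ is not the definition and is not known to follow from it, and the entire difficulty of the lemma is to organize the construction so that all covers can be declared in advance. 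A second, related defect: on the transition region of a blend, $f_n(x)=\lambda(x)g_V(x)+(1-\lambda(x))f_{n-1}(x)$ lies on a segment that may cross $\psi(x)$ (which is merely closed, not convex), and even for a point already secured at stage $n$ with margin $\mu_n(x)>0$, a later blend at stage $m>n$ keeps it off $\psi(x)$ only if the stage-$m$ control is pointwise smaller than $\mu_n$ --- a quantity unavailable when $\mathcal U_m$ must be fixed. Both problems are the same scheduling obstruction, and closing it requires either the nerve/aspherical-values machinery of \cite{gv} or a selection theorem for filtered l.s.c.\ maps over $C$-spaces; the elementary Michael-plus-perturbation outline does not suffice on its own.
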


\begin{pro}\label{fibred_position}
Let $p:E\to B$ be a locally compact $ANR$-fibration with compact $Q$-manifold fibers such that $B$ is a locally compact $C$-space. Then
every $b\in B$ has a basis of neighborhoods $U$ with compact closures such that the fibrations $p|p^{-1}(\overline U):p^{-1}(\overline U)\to \overline U$ have the \rm{FGP}-property.
\end{pro}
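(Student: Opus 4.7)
The strategy is to combine the selection theorem for $C$-spaces (Lemma \ref{gv}) with the disjoint $n$-cells property enjoyed by compact $Q$-manifold fibers. First I reduce to a trivial situation. Fix $b_0 \in B$. Since $p$ is an $ANR$-fibration and $B$ is locally compact, $b_0$ admits a neighborhood basis of relatively compact open sets $U$ over which $p$ is a trivial $ANR$-fibration; fix such $U$. Being closed in the locally compact $C$-space $B$, $\overline U$ is itself a compact paracompact $C$-space. Realize $p^{-1}(\overline U)$ as a closed fiber-preserving subset of $X \times \overline U$ with $X$ a locally compact separable metric $ANR$, and let $r : X \times \overline U \to p^{-1}(\overline U)$ be the associated fiber-preserving retraction with $r_b := r|_{X \times \{b\}}$. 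Using Proposition A9.4 of \cite{tw}, embed $X$ as a closed subset of the convex set $L = Q \times [0,\infty) \subset \ell_2 \times \mathbb R$ in a Banach space, and let $\rho : N \to X$ be an $ANR$ retraction from an open neighborhood $N$ of $X$ in $L$.

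Fix $n \geq 1$, fiber-preserving maps $f_1, f_2 : \mathbb I^n \times \overline U \to p^{-1}(\overline U)$, and an open cover $\mathcal U$ of $p^{-1}(\overline U)$ measuring the desired approximation tolerance. By Lemma \ref{limitation}, $\mathcal U$ is witnessed via $d_E$ by a continuous positive function $\varepsilon : p^{-1}(\overline U) \to (0,1]$. Work in the Banach space $Y = C(\mathbb I^n, L)^2$ and, shrinking $\varepsilon$ as needed, define
\[
\varphi(b) = \{(g_1,g_2) \in Y : \|g_i(x) - \pi_L f_i(x,b)\|_L \le \varepsilon(f_i(x,b))/2,\ i=1,2,\ x \in \mathbb I^n\},
\]
a closed convex subset of $Y$ lying in the open set $V \subset Y$ of pairs whose images remain in $N$; then $\varphi : \overline U \to \mathcal F_c(Y)$ is lower semi-continuous. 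Define further
\[
\psi(b) = \{(g_1,g_2) \in V : r_b(\rho(g_1(\mathbb I^n))) \cap r_b(\rho(g_2(\mathbb I^n))) \ne \varnothing\};
\]
compactness of $\mathbb I^n$ together with joint continuity of $r$ and $\rho$ show that $\psi$ has closed graph in $\overline U \times V$.

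The heart of the argument, and its main obstacle, is verifying that $\varphi(b) \cap \psi(b)$ is a $Z$-set in the infinite-dimensional convex body $\varphi(b)$ for every $b \in \overline U$. The decisive input is that the fiber $F_b = p^{-1}(b)$ is a compact $Q$-manifold and so possesses the disjoint $n$-disks property for all $n$. Combined with the infinite-dimensional room provided by the ambient Banach space $L$, which permits transverse perturbation inside the tubular retract $N$ of $X$, any continuous map from a compactum into $\varphi(b)$ can be approximated within $\varphi(b)$ by one whose image avoids $\psi(b)$. This parametric disjoint-disks argument in the $Q$-manifold fiber, transported by $\rho \circ r_b$ into the convex body $\varphi(b)$, is the delicate technical step and the only place the $Q$-manifold hypothesis on the fibers is used.

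Granting the $Z$-set claim, Lemma \ref{gv} yields a continuous selection $s : \overline U \to V$ with $s(b) = (g_{1,b}, g_{2,b}) \in \varphi(b) \setminus \psi(b)$. Setting $\tilde f_i(x,b) = r_b(\rho(g_{i,b}(x)))$ produces fiber-preserving continuous maps $\mathbb I^n \times \overline U \to p^{-1}(\overline U)$ that are $\mathcal U$-close to $f_i$ (by the choice of $\varepsilon$ and the uniform continuity of $\rho$ and $r$ on compacta) and have disjoint images. Since $n \geq 1$ was arbitrary, $p | p^{-1}(\overline U)$ has the FGP property, and $\{U\}$ ranges over the desired neighborhood basis of $b_0$.
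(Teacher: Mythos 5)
Your overall architecture matches the paper's: reduce to a trivial fibration over a compact $C$-space, set up a lower semi-continuous convex-valued map $\varphi$ and an ``exceptional'' map $\psi$ with closed graph in a function space sitting inside a Banach space, verify the $Z$-set hypothesis, and invoke Lemma~\ref{gv}. But the step you explicitly flag as ``the heart of the argument'' --- that $\varphi(b)\cap\psi(b)$ is a $Z$-set in $\varphi(b)$ --- is precisely the step you do not prove, and it is not a routine consequence of ``infinite-dimensional room.'' The paper proves it (Claim~\ref{3}) by a concrete device: the exponential homeomorphism $C(Q\times\mathbb I^n\times\{1,2\},p^{-1}(b))\cong C(Q,C(\mathbb I^n\times\{1,2\},p^{-1}(b)))$ converts a map of $Q$ into the function space into a single map of the compactum $Q\times\mathbb I^n\times\{1,2\}$ into the fiber, and then the disjoint-cells property of the compact $Q$-manifold $p^{-1}(b)$ (for the \emph{infinite-dimensional} cells $Q\times\mathbb I^n$, not just finite-dimensional disks) produces the required perturbation. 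You should supply this argument rather than assert it.

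Moreover, your particular setup makes the $Z$-set verification harder than it needs to be, arguably to the point of a genuine obstruction. You work with the composite $r_b\circ\rho$ from a neighborhood $N\subset L$ onto the fiber, and your convex body $\varphi(b)$ is a metric ball in $C(\mathbb I^n,L)^2$. Even granting that the fiberwise images $r_b(\rho(g_i(\mathbb I^n)))$ can be pushed apart inside $p^{-1}(b)$ by the disjoint-cells property, the resulting maps land in $p^{-1}(b)\subset L$ only after passing through $r_b\circ\rho$, whose modulus of continuity inflates distances; the perturbed pair need not lie in the ball $\varphi(b)$ (only in a ball of radius controlled by that modulus), so you have not produced an approximation \emph{within} $\varphi(b)$, which is what Lemma~\ref{gv} requires. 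The paper avoids this entirely by first stabilizing: Proposition~\ref{approximation} and \cite[Theorem 2.3]{tw} show $E\times Q\times[0,1)\to B$ is an honest trivial $Q$-manifold bundle, giving a fiber-preserving \emph{homeomorphism} $h:E\times Q\times[0,1)\to B\times M$ with $M$ open in a Hilbert cube. Then the selection problem lives in the fixed open set $V=C(\mathbb I^n\times\{1,2\},M)$, $\psi(b)$ is a $Z$-set in all of $V$ (so its intersection with the ball is handled by a radial contraction toward the center), and no retraction intervenes between the perturbation and the convex body. You either need to import this stabilization step or explain how to control the loss under $r_b\circ\rho$ uniformly over $b\in\overline U$; as written, the proof is incomplete at its central point.
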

\begin{proof}
%Since $p|p^{-1}(K):p^{-1}(K)\to K$ is a trivial locally compact $ANR$-fibration, we can assume that $B$ itself is a compact $C$-space.
%In such a case $p$ is a proper fibration in the sense of \cite{tw} and,
By Proposition \ref{approximation}, the fibration $E\times Q\to B$ has the \rm{FGP}-property.
Hence, according to \cite[Theorem 2.3]{tw}, $p':E\times Q\to E\to B$ is a $Q$-manifold bundle. Consequently,
$p'':E\times Q\times [0,1)\to E\to B$ is also a $Q$-manifold bundle. Let $\mathcal U$ be a simultaneous local trivialization of $p$ and
$p'$, and hence of $p''$.
%We can assume that each $U\in\mathcal U$ has a compact closure $\overline U$ such that $p''|(p^{-1}(\overline U)\times Q\times [0,1))$ is a trivial $Q$-manifold bundle.
It is clear that the proof is reduced to show that for every compact set $K\subset B$ which is contained in some $U\in\mathcal U$ the fibration $p|p^{-1}(K):p^{-1}(K)\to K$ has the \rm{FGP}-property. To this end, we can assume that $p:E\to B$ is a trivial $ANR$-fibration over a compact space $B$ such that both $p':E\times Q\to E\to B$ and $p'':E\times Q\times [0,1)\to E\to B$ are trivial $Q$-manifold bundles. Therefore, we need
to show that for every $n$ every fiber-preserving map $f:\mathbb I^n\times\{1,2\}\times B\to E$ can be approximated by fiber-preserving maps $f':\mathbb I^n\times\{1,2\}\times B\to E$ such that
 $f'(\mathbb I^n\times\{1\})\cap f'(\mathbb I^n\times\{2\})=\varnothing$.
%$p$ has the \rm{FGP}-property.
To do this, take a fiber-preserving homeomorphism
$$h:E\times Q\times [0,1)\to B\times M,$$ where $M$ is a $Q$-manifold homeomorphic to $p^{-1}(b)\times Q\times [0,1)$ for all $b\in B$.
%We need to prove
% To this end, we consider the fibration $p':E\times Q\times [0,1)\to E\to B$. Since
%Then $h_K=h|(p^{-1}(K)\times Q\times [0,1))$ is a homeomorphism between $p^{-1}(K)\times Q\times [0,1)$ and $K\times M$.
Since $M$ is a product of the $Q$-manifolds $p^{-1}(b)\times Q$ and $[0,1)$, it can be embedded as an open subset of a copy $Q_M$ of the Hilbert cube \cite [Corollary 7.4.4(2)]{vm}.  Since the domains of all function spaces in the present proof are compact spaces, the limitation topology coincides with the compact open topology. Therefore, all function spaces will be considered with the compact open topology. %and identify $E$ with $E\times\{\overline 0\}\times 0$, where $\overline 0=(0,0,...)\in Q$.
%we may consider $C(B,C(Q\times\{1,2\},E)$ as a subset of $C(B,C(Q\times\{1,2\},E\times Q\times [0,1))$.
We also consider the projection $\pi_E:E\times Q\times [0,1)\to E$. Then $\pi_E$ generates the continuous map
$$\pi_E^*:C(\mathbb I^n\times\{1,2\},E\times Q\times [0,1))\to C(\mathbb I^n\times\{1,2\},E),$$ $\pi_E^*(w)=\pi_E\circ w$.
Since, $C(\mathbb I^n\times\{1,2\},E\times Q\times [0,1))$ is homeomorphic to the product $C(\mathbb I^n\times\{1,2\},E)\times C(\mathbb I^n\times\{1,2\},Q\times [0,1))$, $\pi_E^*$ is the projection onto $C(\mathbb I^n\times\{1,2\},E)$.
%Since, $\mathbb I^n\times\{1,2\}$ is compact, the limitation topology on both function spaces coincides with the compact open one.
For every $b\in B$ let $C(b)=C(\mathbb I^n\times\{1,2\},p^{-1}(b)\times Q\times [0,1))$ and $C=\bigcup\{C(b):b\in B\}$ considered as a subspace of $C(\mathbb I^n\times\{1,2\},E\times Q\times [0,1))$. Let also
$L(b)$ be the subspace of $C(b)$ consisting of all maps $g$ such that
$\pi_E^*(g)(\mathbb I^n\times\{1\})\cap \pi_E^*(g)(\mathbb I^n\times\{2\})\neq\varnothing$. Actually, $L(b)$ is the product $L_1(b)\times C(\mathbb I^n\times\{1,2\},Q\times [0,1))$, where $L_1(b)$ is the set of all $g'\in C(\mathbb I^n\times\{1,2\},p^{-1}(b))$ with
$g'(\mathbb I^n\times\{1\})\cap g'(\mathbb I^n\times\{2\})\neq\varnothing$.
%Since the $Q$ is compact, the limitation topology on $C(Q\times\{1,2\},E\times Q\times [0,1))$ coincides with the compact open topology. In particular, each $C(b)$ is a separable metric space.

\begin{claim}\label{3}
$L(b)$ is a $Z$-set in $C(b)$ for each $b\in B$.
\end{claim}
Since $L(b)$ is homeomorphic to the product $L_1(b)\times C(\mathbb I^n\times\{1,2\},Q\times [0,1))$ and $C(b)$ is homeomorphic to
$C(\mathbb I^n\times\{1,2\},p^{-1}(b))\times C(\mathbb I^n\times\{1,2\},Q\times [0,1))$, it suffices to show that $L_1(b)$ is a $Z$-set in $C(\mathbb I^n\times\{1,2\},p^{-1}(b))$.
Obviously, $L_1(b)$ is closed in $C(\mathbb I^n\times\{1,2\},p^{-1}(b))$.
So, we need to show that every map $u:Q\to C(\mathbb I^n\times\{1,2\},p^{-1}(b))$ is approximated by maps $u':Q\to C(\mathbb I^n\times\{1,2\},p^{-1}(b))$ with $u'(Q)\cap L_1(b)=\varnothing$. Since $Q$ is compact, the exponential map
$$\Lambda_b:C(Q\times \mathbb I^n\times\{1,2\},p^{-1}(b))\to C(Q,C(\mathbb I^n\times\{1,2\},p^{-1}(b))),$$ defined by $\{[\Lambda_b(v)](q)\}(x)=v(q,x)$, $q\in Q$ and
$x\in \mathbb I^n\times\{1,2\}$, is a homeomorphism, see \cite[Theorem 3.4.3]{en}. So, $v=\Lambda_b^{-1}(u)\in C(Q\times \mathbb I^n\times\{1,2\},p^{-1}(b))$ for every $u\in C(Q,C(\mathbb I^n\times\{1,2\},p^{-1}(b)))$.
%Moreover, $\Lambda_b^{-1}(u)=(v_1,v_2)$, where $v_1\in C(Q\times \mathbb I^n\times\{1,2\},p^{-1}(b))$ and
%$v_2\in C(Q\times \mathbb I^n\times\{1,2\},Q\times [0,1))$.
Because $p^{-1}(b)$ is a $Q$-manifold, $v$ can be approximated by maps
$v'\in C(Q\times \mathbb I^n\times\{1,2\},p^{-1}(b))$ such that $v'(Q\times \mathbb I^n\times\{1\})\cap v'(Q\times \mathbb I^n\times\{2\})=\varnothing$. Therefore, $u'(Q)\cap L_1(b)=\varnothing$, where $u'=\Lambda_b(v')$.
%every $u\in C(Q,C(b))$ is approximated by maps $u'=\Lambda_b((v_1',v_2))$ such that $u'(Q)\cap L(b)=\varnothing$
%(the last conclusion is based on the fact that $C(Q\times \mathbb I^n\times\{1,2\},p^{-1}(b)\times Q\times [0,1))$ is homeomorphic to the product of $C(Q\times \mathbb I^n\times\{1,2\},p^{-1}(b))$ and $C(Q\times \mathbb I^n\times\{1,2\},Q\times [0,1))$).
This completes the proof of the claim.

Next, we consider the set $L=\bigcup\{L(b):b\in B\}\subset C$. It is easily seen that $L$ is closed in $C$ and that $\widetilde p:C\to B$,
$\widetilde p(g)=b$ for all $g\in C(b)$, defines a continuous map.
The fiber-preserving homeomorphism $h$ provides a homeomorphism between the sets $p^{-1}(b)\times Q\times [0,1)$ and $\{b\}\times M$ for every $b\in B$. Consequently, $h$ generates a homeomorphism
$$h^*:C(\mathbb I^n\times\{1,2\},E\times Q\times [0,1))\to C(\mathbb I^n\times\{1,2\},B\times M)$$ such that $h^*(g)\in C(\mathbb I^n\times\{1,2\},\{b\}\times M)$ for all
$g\in C(b)$. On the other hand, there is a natural homeomorphism between $C(\mathbb I^n\times\{1,2\},B\times M)$ and the product
$C(\mathbb I^n\times\{1,2\},B)\times C(\mathbb I^n\times\{1,2\},M)$. Therefore, we have the
%Hence, $h^*$ is a fiber-preserving homeomorphism between $C$ and $B\times C(\mathbb I^n\times\{1,2\},M)$.
commutative diagram
$$
\xymatrix{
C \ar[r]^{h^*}\ar[dr]_{\widetilde p} & B\times V \ar[d]^{\pi_B}\\
& B \\
}
$$
with $h^*(C(b))=\{b\}\times V$, where $V=C(\mathbb I^n\times\{1,2\},M)$.
Let us show that any fiber-preserving map $f\in C_P(\mathbb I^n\times\{1,2\}\times B,E)$ can be approximated by fiber-preserving maps $g:\mathbb I^n\times\{1,2\}\times B\to E$ such that $g(\mathbb I^n\times\{1\}\times B)\cap g(\mathbb I^n\times\{2\}\times B)=\varnothing$. To this end, fix
a map $f\in C_P(\mathbb I^n\times\{1,2\}\times B,E)$ and consider
the exponential homeomorphism $$\Lambda: C(\mathbb I^n\times\{1,2\}\times B,E)\to C(B,C(\mathbb I^n\times\{1,2\},E)).$$
Then $\Lambda(f)(b)\in C(\mathbb I^n\times\{1,2\}, p^{-1}(b))$ for all $b\in B$. Therefore, $\Lambda$ is a homeomorphism between
$C_P(\mathbb I^n\times\{1,2\}\times B,E)$ and the set $\Omega$ consisting of all
$\{l\in C(B,C(\mathbb I^n\times\{1,2\},E))$ with $l(b)\in C(\mathbb I^n\times\{1,2\},p^{-1}(b))$ for every $b\in B$. We identify $E$ with $E\times\{\overline 0\}\times 0$ and each $p^{-1}(b)$ with $p^{-1}(b)\times\{\overline 0\}\times 0$, where $\overline 0=(0,0,...)\in Q$.
We also consider the retraction $r_E:E\times Q\times [0,1)\to E\times\{\overline 0\}\times\{0\}$ defined by $r_E(z_1,z_2,z_3)=(z_1,\overline 0,0)$.
This retraction generates a continuous map
$$r_E^*:C(\mathbb I^n\times\{1,2\},E\times Q\times [0,1))\to C(\mathbb I^n\times\{1,2\},E\times \{\overline 0\}\times\{0\}).$$
Therefore, we identify $C(\mathbb I^n\times\{1,2\},E)$ with  $C(\mathbb I^n\times\{1,2\},E\times \{\overline 0\}\times\{0\})$ and every
$C(\mathbb I^n\times\{1,2\},p^{-1}(b))$ with  $C(\mathbb I^n\times\{1,2\},p^{-1}(b)\times \{\overline 0\}\times\{0\})$.
%Hence,
%every $l\in\Omega$ is a map from $B$ into $r_E^*(C)$.
% Under the identification of $E$ with $E\times\{\overline 0\}\times 0$, we consider $r_E^*(C)$ as a subset of $C$.
 So, every $\Lambda(f)(b)$ can be considered as an element of $C(b)$.
 This means that $\Lambda(f)$ is a map from $B$ to $C$. Moreover, one can show that $\Lambda(f):B\to C$ is an embedding with
 $\Lambda(f)(b)\in\widetilde p^{-1}(b)=C(b)$ for all $b\in B$. So is the composition
 $u=h^*\circ\Lambda(f):B\to B\times V$ such that $u(b)\in\pi_B^{-1}(b)$, $b\in B$.
\begin{claim}\label{4}
$f$ can be approximated by maps $g\in C_P(\mathbb I^n\times\{1,2\}\times B,E)$ such that
$g(\mathbb I^n\times\{1\}\times B)\cap g(\mathbb I^n\times\{2\}\times B)=\varnothing$.
\end{claim}

Let $O_{\Lambda(f)}=\bigcap_{i=1}^{k}<P_i,W_i>$ be a neighborhood of $\Lambda(f)$ in the space $C(B,C(\mathbb I^n\times\{1,2\},E\times\{\overline 0\}\times\{0\}))$ with respect to the compact-open topology. Here, $P_i\subset B$ are compact sets, $W_i$ are open in $C(\mathbb I^n\times\{1,2\},E\times\{\overline 0\}\times\{0\})$ and $<P_i,W_i>$ consists of all maps $l\in C(B,C(\mathbb I^n\times\{1,2\},E\times\{\overline 0\}\times\{0\}))$ such that $l(P_i)\subset W_i$.
 The sets
$\widetilde W_i=(r_E^*)^{-1}(W_i)$ are open in $C(\mathbb I^n\times\{1,2\},E\times Q\times [0,1))${\color{blue}.  S}o are the sets $h^*(\widetilde W_i)$ in
$C(\mathbb I^n\times\{1,2\},B\times M)$. Hence, each $G_i=h^*(\widetilde W_i)\cap (B\times V)$ is open in $B\times V$. Since $\Lambda(f)(P_i)\subset W_i$, $u(P_i)\subset G_i$ for all $i$. Because $M$ is open in $Q_M$ and $Y=C(\mathbb I^n\times\{1,2\},Q_M)$ is homeomorphic to a closed convex subset of the Banach space $\big(C(\mathbb I^n\times\{1,2\},l_2),||.||\big)$, $V=C(\mathbb I^n\times\{1,2\},M)$ is homeomorphic to an open sunset of $Y$. Therefore, the continuous function
$\alpha:B\to\mathbb R$, $\alpha(b)=\rm{dist}(\pi_V(u(b)),Y\backslash V)$ is positive, where $\pi_V:B\times V\to V$ is the projection.
Using that $\{P_i:i\leq k\}$ is a finite family of compact sets in $B$ with $u(P_i)\subset G_i$ and each $\pi_B|u(P_i):u(P_i)\to P_i$ being a homeomorphism, one can show that there exists $m$ such that
the sets $\displaystyle T_b=\{(b,v)\in B\times V:||v-\pi_V(u(b))||<\frac{\alpha(b)}{m}\}$ are contained in $G_i$ for every $b\in P_i$ and $i\leq k$. Now, we consider the set-valued map
$$\displaystyle\varphi: B\rightsquigarrow\mathcal F_c(Y),{~}
\varphi(b)=\{v\in V:||v-\pi_V(u(b))||\leq\frac{\alpha(b)}{N}\},$$
where $N>m$. Obviously, $\{b\}\times\varphi(b)\subset G_i$ for all $b\in P_i$, $i\leq k$. Moreover, $\varphi$ is lower semi-continuous. Observe also that the set $h^*(L)$ is closed in $B\times V$ and $h^*(L(b))$ is closed in $h^*(C(b))=\{b\}\times V$ for all $b\in B$. So, $\pi_V(h^*(L(b)))$ is a closed subset of $V$ homeomorphic to $h^*(L(b))$, and by Claim \ref{3}, it is a $Z$-set in $V$. Hence, we have the set-valued map $\psi:B\rightsquigarrow\mathcal Z(V)$,
$\psi(b)=\pi_V(h^*(L(b)))$,
where $\mathcal Z(V)$ is the family of all $Z$-sets in $V$.

We claim that the graph $Gr(\psi)$ of $\psi$ is closed in $B\times V$. Indeed, suppose $\{(b_\gamma,v_\gamma)\}\in Gr(\psi)$ is a net converging to $(b_0,v_0)$ in $B\times V$. Since $h^*$ is a homeomorphism between $C$ and $B\times V$, there is a net $\{g_\gamma\}\subset C$ converging to some $g_0\in C$ such that $g_\gamma\in L(b_\gamma)$ and $h^*(g_\gamma)=(b_\gamma,v_\gamma)$. Since $L$ is closed in $C$, $g_0\in L$. On the other hand, $\{b_\gamma\}$ converges to $b_0$ implies that $g_0\in C(b_0)$. Hence $g_0\in L\cap C(b_0)=L(b_0)$. Finally, $h^*(g_0)=(b_0,v_0)\in Gr(\psi)$.

Since each set $O_b=\displaystyle\{v\in V:||v-\pi_V(u(b))||<\frac{\alpha(b)}{N}\}, {~}b\in B,$ is open in $V$, $\psi(b)
\cap O_b$ is a $Z$-set in $O_b$. This implies that $\psi(b)\cap\varphi(b)$ is a $Z$-set in $\varphi(b)$ for each $b$.
(This follows because radial contraction toward $\pi_V(u(b)$ shows that the boundary of $\phi(b)$ is a Z-set in $\phi(b)$, and as
$\phi(b)\cap O_b$ is a countable union of Z-sets of $C(b)$.  Thus $\psi(b)$ is a countable union of Z-sets of $\phi(b)$.)
Therefore, we may apply Lemma \ref{gv} to find a map $\phi:B\to V$ with $\phi(b)\in\varphi(b)\backslash\psi(b)$ for all $b\in B$.
Then the equality $u'(b)=(b,\phi(b))$ provides a map $u':B\to B\times V$ such that $u'(P_i)\subset G_i$ for all $i\leq k$ and
$u'(B)\cap h^*(L)=\varnothing$. Hence, each $(h^*)^{-1}(u'(b))$ belongs to $C(b)\backslash L$. Consequently, we have a map
$g':B\to C\backslash L$, $g'(b)=(h^*)^{-1}(u'(b))$,
 with $g'(P_i)\subset\widetilde W_i$, $i\leq k$. The composition $g''=r_E^*\circ g'$ provides a map from $B$ into $C_P(\mathbb I^n\times\{1,2\},E\times\{\overline 0\}\times 0))$ such that for every $b\in B$ we have:
\begin{itemize}
\item[(3)] $g''(b)\in C(\mathbb I^n\times\{1,2\},p^{-1}(b)\times\{\overline 0\}\times 0))$;
\item[(4)] $g''(P_i)\subset W_i$, $i\leq k$;
\item[(5)] $g''(b)(\mathbb I^n\times\{1\})\cap g''(b)(\mathbb I^n\times\{2\})=\varnothing$.
\end{itemize}
Condition $(4)$ means that $g''\in O_{\Lambda(f)}$. Therefore, $g=\Lambda^{-1}(g'')\in C(\mathbb I^n\times\{1,2\}\times B,E)$ is a fiber-preserving approximation of $f$ such that $g(\mathbb I^n\times\{1\}\times B)\cap g(\mathbb I^n\times\{2\}\times B)=\varnothing$. This provides the proof of Claim \ref{4}.

Therefore, $p$ has the \rm{FGP}-property.
\end{proof}
%Next theorem provides a partial answer to Question QM22 from \cite{w}.

\begin{thm}
Let $p:E\to B$ be a locally compact $ANR$-fibration with compact fibers such that $B$ is a paracompact locally compact $C$-space. Then $p$ is a $Q$-manifold bundle if and only for every $b\in B$ we have:
\begin{itemize}
\item[(1)] $p^{-1}(b)$ has the disjoint disks property;
\item[(2)] $C(\mathbb I^n,p^{-1}(b))$ contains a dense set of homological $Z_n$-maps for each $n\geq 2$.
\end{itemize}
\end{thm}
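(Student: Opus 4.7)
The forward implication $(\Rightarrow)$ is immediate: if $p$ is a $Q$-manifold bundle with compact fibers, then each $p^{-1}(b)$ is a compact $Q$-manifold, so by Toru\'{n}czyk's characterization of $Q$-manifolds it satisfies the disjoint disks property, giving (1); moreover, for every $n\geq 2$ the $Z_n$-maps are dense in $C(\mathbb I^n,p^{-1}(b))$, and every $Z_n$-map is trivially a homological $Z_n$-map, which yields (2).

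For the converse, the plan is to reduce the statement to Proposition \ref{fibred_position} and then to \cite[Theorem 2.3]{tw}. The only nontrivial step is to promote (1) and (2) to the conclusion that each fiber $F_b=p^{-1}(b)$ is itself a compact $Q$-manifold. Since $F_b$ is a compact $ANR$ (in particular $LC^n$ for every $n$), Toru\'{n}czyk's characterization reduces this to showing that, for each $n\geq 2$, the $Z_n$-maps are dense in $C(\mathbb I^n,F_b)$; this gives the disjoint $n$-cells property for all $n\geq 2$ (whence for $n=0,1$ as well, by composing with the inclusion $\mathbb I^k\hookrightarrow\mathbb I^n$), which is exactly the discrete approximation property needed for the Toru\'{n}czyk criterion.

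The density of the $Z_n$-maps I would obtain by a Baire-category argument on the Polish space $C(\mathbb I^n,F_b)$, exploiting the fact that Proposition \ref{z} identifies $Z_n$-maps with maps that are simultaneously $Z_2$-maps and homological $Z_n$-maps. The set of $Z_2$-maps is a dense $G_\delta$ by (1) combined with the standard observation that the $Z_k$-condition on a closed set in a compact $ANR$ is a countable-approximation condition (test against a countable dense family in $C(\mathbb I^2,F_b)$). The set of homological $Z_n$-maps is dense by (2) and likewise $G_\delta$: the finite-additivity and hereditarity of the homological $Z_n$-condition recorded in \cite{bck} permit the vanishing $H_k(U,U\setminus h(\mathbb I^n))=0$ to be tested against a countable basis of open sets in $F_b$. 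Baire category then produces a dense intersection, which by Proposition \ref{z} coincides with the set of $Z_n$-maps; hence $F_b$ is a $Q$-manifold.

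With fibers now known to be compact $Q$-manifolds, Proposition \ref{fibred_position} supplies at every $b\in B$ a basis of relatively compact open neighborhoods $U$ such that $p|p^{-1}(\overline U)$ has the \rm{FGP}-property; this is precisely the hypothesis of \cite[Theorem 2.3]{tw}, and the conclusion is that $p$ is a $Q$-manifold bundle. The step I expect to be the main obstacle is the $G_\delta$-nature of the homological $Z_n$-maps: unlike the classical $Z_k$-property, it is defined through singular homology rather than direct approximation, so a careful appeal to \cite{bck} is required. Should this step resist a direct argument, an alternative is to prove a stability statement, namely that a sufficiently small perturbation of a homological $Z_n$-map by a $Z_2$-approximation (possible via (1)) preserves the homological class; combining such stability with (2) would produce simultaneously $Z_2$- and homological $Z_n$-approximations, which is again what Proposition \ref{z} needs.
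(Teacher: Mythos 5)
Your forward implication and your overall reduction (fibers are $Q$-manifolds $\Rightarrow$ Proposition~\ref{fibred_position} $\Rightarrow$ \cite[Theorem 2.3]{tw}) follow the paper's skeleton, but there are two places where the substance is missing. First, the step ``each fiber is a compact $Q$-manifold'' is not proved in the paper by the Baire-category argument you sketch; it is quoted from \cite[Theorem 2.9]{kv}, whose content is precisely that a compact $ANR$ with the disjoint disks property and dense homological $Z_n$-maps in $C(\mathbb I^n,\cdot)$ is a $Q$-manifold. The obstacle you yourself flag --- that the homological $Z_n$-maps should form a $G_\delta$ in $C(\mathbb I^n,F_b)$ --- is real: the vanishing of $H_k(U,U\setminus f(\mathbb I^n))$ is not an approximation condition and is not visibly open (or closed) under small perturbations of $f$, so the intersection argument with the $Z_2$-maps does not go through as stated. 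In effect you are attempting to reprove the cited theorem, and the attempt is incomplete at exactly the point where that theorem does its work.

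The more serious gap is in the final step. Proposition~\ref{fibred_position} gives the FGP-property only for the restrictions $p|p^{-1}(\overline U)$ over \emph{compact} sets $\overline U$, whereas \cite[Theorem 2.3]{tw} requires the FGP-property of $p$ itself, i.e.\ the fibred disjoint $n$-disks property of $p^{-1}(U)$ for the \emph{open} members $U$ of a local trivialization. These are not the same: a fiber-preserving map $f:\mathbb I^n\times\{1,2\}\times U\to p^{-1}(U)$ over a non-compact open $U$ does not extend to $\overline U$, and separating its restrictions over each compact piece separately does not produce a single global separation. Bridging this is the entire second half of the paper's proof: $B$ is written as $\bigcup_i B_i$ with each $B_i$ a closed union of a discrete family of compacta; the sets $\Theta_{n,i}$ of separated fiber-preserving maps over $B_i$ are shown to be open (via the discreteness of the family) and dense (via the compact case); the restriction maps $q_i$ are open by Lemma~\ref{open}, so the $q_i^{-1}(\Theta_{n,i})$ are open and dense; and Proposition~\ref{baire} supplies the Baire property of $C_P(\mathbb I^n\times\{1,2\}\times B,E)$ needed to intersect them. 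Your proposal omits this gluing argument entirely, so as written the converse direction does not reach the hypotheses of \cite[Theorem 2.3]{tw}.
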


\begin{proof}
Suppose $p$ is a $Q$-manifold bundle. Then there is a $Q$-manifold $M$ and an open cover $\mathcal U$ of $B$ such that
$p^{-1}(U)$ is homeomorphic to $U\times M$ for all $U\in\mathcal U$. Because for every $U\in\mathcal U$ and every $b\in U$ the fiber $p^{-1}(b)$ is a $Q$-manifold, conditions $(1)$ and $(2)$ are satisfied.

Suppose now that $p$ satisfies conditions $(1)$ and $(2)$. Then each $p^{-1}(b)$, $b\in B$, is a compact $ANR$ satisfying the disjoint disks property and for every $n\geq 3$ the space $C(\mathbb I^n,p^{-1}(b))$ contains a dense set of homological $Z_n$-maps. Therefore, by \cite[Theorem 2.9]{kv}, each fiber of $p$ is a $Q$-manifold.
We choose a local trivialization $\{U_\alpha:\alpha\in A\}$ of $p$ such that if $K\subset B$ is a compact set contained in some $U_\alpha$, then the fibration $p|p^{-1}(K):p^{-1}(K)\to K$ has the \rm{FGP}-property, see Proposition \ref{fibred_position}. Without loss of generality, we may assume that all $U_\alpha$ are $F_\sigma$-subsets of $B$, hence $C$-spaces. According to
\cite[Theorem 2.3]{tw}, it suffices to show that each $p|p^{-1}(U_\alpha)$ has the \rm{FGP}-property. Because every $U_\alpha$ is a locally compact $C$-space, we can assume that $p$ is a trivial fibration such that any fibration $p|p^{-1}(K):p^{-1}(K)\to K$, where $K\subset B$ is compact, has the \rm{FGP}-property. Therefore, we need to show that $p$ has the \rm{FGP}-property.
To this end, using the paracompactness of $B$, for every $i$ choose an open discrete family $\gamma_i=\{V_{s,i}:s\in S\}$ in $B$ such that $\bigcup_{i=1}^\infty\gamma_i$ is a cover of $B$
and each $\overline{V}_{s,i}$ is compact, see \cite{en}. Then the family
$\overline\gamma_i=\{\overline{V}_{s,i}:s\in S\}$ is also discrete in $B$ and $B_i=\bigcup\{\overline{V}_{s,i}:s\in S\}$ is closed in $B$.
 Since for every $i$ the fibrations $p|p^{-1}(\overline{V}_{s,i})$, $s\in S$,
have the \rm{FGP}-property, each $p|p^{-1}(B_i)$ also has the \rm{FGP}-property. Consequently, for every $n$ and $i$ the set
$\Theta_{n,i}$ of all $f\in C_P(\mathbb I^n\times\{1,2\}\times B_i,p^{-1}(B_i))$ with $f(\mathbb I^n\times\{1\}\times B_i)\cap f(\mathbb I^n\times\{2\}\times B_i)=\varnothing\}$ is dense in $C_P(\mathbb I^n\times\{1,2\}\times B_i,p^{-1}(B_i))$.
\begin{claim}
Each $\Theta_{n,i}$ is open in $C_P(\mathbb I^n\times\{1,2\}\times B_i,p^{-1}(B_i))$.
\end{claim}
Indeed, let $f_0\in\Theta_{n,i}$.
Using that  $\overline\gamma_i$ is discrete in $B$ and the pairs $F_{s,i}^1=f_0(\mathbb I^n\times\{1\}\times\overline{V}_{s,i})$, $F_{s,i}^2=f_0(\mathbb I^n\times\{2\}\times\overline{V}_{s,i})$ are compact disjoint subsets of $p^{-1}(B_i)$, we can find open subsets
$W_{s,i}^j$ and $G_{s,i}^j$ of $p^{-1}(B_i)$, $j=1,2$, with the following properties:
\begin{itemize}
\item[(6)] $F_{s,i}^j\subset W_{s,i}^j\subset\overline{W}_{s,i}^j\subset G_{s,i}^j$, $j=1,2$;
\item[(7)] For each $j=1,2$ the families $\beta_i^j=\{W_{s,i}^j:s\in S\}$ and $\eta_i^j=\{G_{s,i}^j:s\in S\}$ are discrete in $p^{-1}(B_i)$;
\item[(8)] $G_{s,i}^1\cap G_{s,i}^2=\varnothing$ for each $s$ and $i$.
\end{itemize}
Now, let $\mathcal G_i=\{G_{s,i}^1, G_{s,i}^2,p^{-1}(B_i)\setminus\overline W:s\in S\}$, where $\overline W=\bigcup\{\overline{W}_{s,i}^1\cup\overline{W}_{s,i}^2:s\in S\}$. Every $\mathcal G_i$ is an open cover of $p^{-1}(B_i)$ such that
$f\in B(f_0,\mathcal G_i)$ implies $f(\mathbb I^n\times\{1\}\times\{b\})\cap f(\mathbb I^n\times\{2\}\times\{b\})=\varnothing$ for every $b\in B_i$.
Hence, $B(f_0,\mathcal G_i)\subset\Theta_{n,i}$. This completes the proof of the claim because $B(f_0,\mathcal G_i)$ is a neighborhood of $f_0$ with respect to the limitation topology, see \cite{bo1}.

Since each $q_i:C_P(\mathbb I^n\times\{1,2\}\times B,E)\to C_P(\mathbb I^n\times\{1,2\}\times B_i,p^{-1}(B_i))$ is open (see Lemma \ref{open}),
$q_i^{-1}(\Theta_{n,i})$ is open and dense in $C_P(\mathbb I^n\times\{1,2\}\times B,E)$. According to Proposition \ref{baire}, $C_P(\mathbb I^n\times\{1,2\}\times B,E)$ has the Baire property. Therefore, the set
$\Theta_n=\bigcap_{i=1}^\infty q_i^{-1}(\Theta_{n,i})$ is dense in $C_P(\mathbb I^n\times\{1,2\}\times B,E)$. Finally, observe that
$g\in\Theta_n$ implies $g(\mathbb I^n\times\{1\}\times\{b\})\cap g(\mathbb I^n\times\{2\}\times\{b\})=\varnothing$ for every $b\in B$.
Therefore, $p$ has the \rm{FGP}-property.
\end{proof}

\section{$ANR$-fibrations over $C$-spaces}
The FGP property is necessary and sufficient for a locally compact $ANR$ fibration with compact fibers to be a $Q$-manifold bundle. In the present section we provide another condition which still guarantees that any locally compact $ANR$ fibration with compact fibers satisfying that condition is a $Q$-manifold bundle provided the base is a $C$-space.

A closed set $A\subset E$ is said to be a {\em fibred $Z_n$-set} if the set of all $f\in C_P(\mathbb I^n\times B,E)$ with $f(\mathbb I^n\times B)\cap A=\varnothing$ is dense in $C_P(\mathbb I^n\times B,E)$. We also define weak versions of fibred $Z_n$-sets: a closed set $A\subset E$ is a {\em weak fibred $Z_n$-set in $E$} (resp., {weak fibred homological $Z_n$-set in $E$})  if $A\cap p^{-1}(b)$ is a $Z_n$-set (resp., homological $Z_n$-set) in $p^{-1}(b)$ for all $b\in B$.
Fibred $Z_\infty$-sets (resp., weak fibred $Z_\infty$-sets or weak fibred homological $Z_\infty$-sets) will be called {\em fibred $Z$-sets} (resp., {\em weak fibred $Z$-sets or weak fibred homological $Z$-sets}). Finally, a map $f\in C_P(K\times B,E)$, where $K$ is a given space, is {\em fibred $Z$-map,
weak fibred $Z$-map or weak fibred homological $Z$-map} if the image $f(K)$ has the corresponding property.

%Let $p:E\to B$ be a locally compact $ANR$-fibration and $p':X\to B$ be a map. A  map $f\in C_P(X,E)$ is called {\em $B$-proper} \cite{tw}
%if there is an open cover $\mathcal U$ of $E$ such that for each $U\in\mathcal U$ there is a fiber-preserving closed embedding
%$f^{-1}(\overline U)\rightarrow Q\times B$. A locally compact $ANR$-fibration $p:E\to B$ is said to be {\em proper} if the corresponding maps $r$ and $H$ can be taken $B$-proper, \cite{tw}.
\begin{thm}\label{fibred z}
Let $p:E\to B$ be a locally compact $ANR$-fibration with compact fibers such that $B$ is a $C$-space. Then every $b\in B$ has a basis of neighborhoods $U$ such that each
weak fibred $Z$-set in $p^{-1}(U)$ is a fibred $Z$-set in $p^{-1}(U)$.
\end{thm}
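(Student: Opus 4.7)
The plan is to adapt the proof of Proposition \ref{fibred_position}, replacing the ``self-intersection'' set with the set of fiber-preserving maps whose image meets $A$, and exploiting the hypothesis that each slice $A\cap p^{-1}(b)$ is a $Z$-set in $p^{-1}(b)$. By Proposition \ref{approximation} and \cite[Theorem 2.3]{tw}, the stabilised fibration $p'':E\times Q\times[0,1)\to B$ is a $Q$-manifold bundle. I choose $U\ni b$ to be a paracompact open $C$-space (an $F_\sigma$-subset with compact closure, say) over which $p''$ trivialises via a fiber-preserving homeomorphism $h:p^{-1}(U)\times Q\times[0,1)\to U\times M$, where $M$ is a $Q$-manifold embedded as an open subset of a Hilbert cube $Q_M$. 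I retain the identification $E\equiv E\times\{\overline 0\}\times\{0\}$ and the retraction $r_E:E\times Q\times[0,1)\to E\times\{\overline 0\}\times\{0\}$ from the proof of Proposition \ref{fibred_position}.

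Let $A\subset p^{-1}(U)$ be a weak fibred $Z$-set. Since each $A_b:=A\cap p^{-1}(b)$ is a $Z$-set in $p^{-1}(b)$, multiplying a defining deformation by $\mathrm{id}_{Q\times[0,1)}$ shows that $A_b\times Q\times[0,1)$ is a $Z$-set in $p^{-1}(b)\times Q\times[0,1)$; let $\widetilde A_b\subset M$ denote its image under the fiber part of $h$. Fix $n\geq 1$, $f\in C_P(\mathbb I^n\times U,p^{-1}(U))$, and a limitation neighborhood $W$ of $f$. Using the exponential law as in Proposition \ref{fibred_position}, convert $f$ into an embedding $u=h^*\circ\Lambda(f):U\to U\times V$, where $V=C(\mathbb I^n,M)$, with $u(b)\in\{b\}\times V$. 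Define
$$L(b)=\{v\in V:v(\mathbb I^n)\cap\widetilde A_b\neq\varnothing\},$$
which is closed in $V$. Since $\widetilde A_b$ is a $Z$-set in the $Q$-manifold $M$, a small homotopy of $\mathrm{id}_M$ into $M\setminus\widetilde A_b$ yields, by post-composition, a small homotopy of $\mathrm{id}_V$ into $V\setminus L(b)$, so $L(b)$ is a $Z$-set in $V$. The set-valued map $\psi(b)=L(b)$ has closed graph in $U\times V$, by the same net argument as in Proposition \ref{fibred_position} applied to the closed set $h(A\times Q\times[0,1))\subset U\times M$ and the compactness of $\mathbb I^n$.

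The rest of the construction is literally that of Proposition \ref{fibred_position}: build the l.s.c.\ map $\varphi:U\to\mathcal F_c(Y)$ of small radial balls around $\pi_V(u(b))$ in $V$, verify by the radial-contraction argument that $\varphi(b)\cap\psi(b)$ is a $Z$-set in $\varphi(b)$, apply Lemma \ref{gv} to obtain $\phi:U\to V$ with $\phi(b)\in\varphi(b)\setminus\psi(b)$, and set $g'(b)=(h^*)^{-1}(b,\phi(b))$ and $g=\Lambda^{-1}(r_E^*\circ g')$. Then $g\in W$ and $g(\mathbb I^n\times U)\cap A=\varnothing$. The one delicate point---and the main obstacle---is that $L(b)$ must be defined via the \emph{thickened} set $A_b\times Q\times[0,1)$ rather than $A_b\times\{\overline 0\}\times\{0\}$, because $r_E$ collapses every $(a,q,t)$ with $a\in A_b$ onto $(a,\overline 0,0)\in A_b$; only the thickened choice guarantees that avoidance survives the retraction. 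The full $Z$-set hypothesis on each $A_b$ (rather than $Z_n$ for fixed $n$) is what makes the thickening $A_b\times Q\times[0,1)$ automatically a $Z$-set in $p^{-1}(b)\times Q\times[0,1)$, and hence $\widetilde A_b$ a $Z$-set in $M$, so that the whole argument goes through uniformly in $n$.
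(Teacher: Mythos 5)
Your proposal is correct: the reduction (stabilize by $Q\times[0,1)$, trivialize $p''$ over a suitable $C$-space neighborhood via \cite[Theorem 2.3]{tw}, thicken $A_b$ to $A_b\times Q\times[0,1)$ so that its image in $M$ is a $Z$-set, and invoke Lemma \ref{gv}) is exactly the paper's, and your ``delicate point'' about defining the obstruction via the thickened set so that avoidance survives the retraction $r_E$ is precisely what the paper does with $H(b)=h(A(b)\times Q\times[0,1))$. Where you diverge is in \emph{where} you apply Lemma \ref{gv}: you transplant the full machinery of Proposition \ref{fibred_position}, passing through the exponential law to the function space $V=C(\mathbb I^n,M)$ and selecting a section $\phi:U\to V$ avoiding $\psi(b)=L(b)$. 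The paper instead exploits the fact that ``miss $A$'' is a \emph{pointwise} condition (unlike the disjoint-images condition of Proposition \ref{fibred_position}): it applies Lemma \ref{gv} with base $\mathbb I^n\times B$, with $\varphi(x,b)$ a small ball in $M$ around $f_M(x,b)$ and $\psi'(x,b)=\pi_M(H(b))\subset M$. This shortcut dispenses with the exponential law and with your verification that $L(b)$ is a $Z$-set in $V$ (only $\psi(b)$ being a $Z$-set in $M$ is needed), at the mild cost of noting that $\mathbb I^n\times B$ is again a paracompact $C$-space. One point your sketch underplays: since the relevant $U$ is open and not compact, the compact-open bookkeeping of Proposition \ref{fibred_position} (the neighborhoods $\bigcap_i\langle P_i,W_i\rangle$) is not available, and the closeness control in the limitation topology must be done via Lemma \ref{limitation} together with a continuous radius function $\alpha(b)<\min\{\delta(x,b):x\in\mathbb I^n\}$ obtained by a selection theorem (Claim \ref{1} in the paper's proof); your ``small radial balls'' need exactly this uniformization over each fiber of $\mathbb I^n\times U\to U$, but it is a routine adaptation rather than a gap.
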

\begin{proof}
As in Proposition \ref{fibred_position}, where we were considering open $F_\sigma$ elements of $\mathcal U$, the proof is reduced to the case when $p$ is a trivial $ANR$-fibration such that
both fibrations $p':E\times Q\to E\to B$ and $p'':E\times Q\times [0,1)\to E\to B$ are trivial $Q$-manifold bundles. Consequently, there is a fiber-preserving homeomorphism
$h:E\times Q\times [0,1)\to M\times B$, where $M$ is a $Q$-manifold homeomorphic to $p^{-1}(b_0)\times Q\times [0,1)$ for some $b_0\in B$ (actually,
$M$ is homeomorphic to $p^{-1}(b)\times Q\times [0,1)$
for all $b\in B$). We consider $M$ as an open subset of a copy $Q_M$ of $Q$, and let $d$ be the usual convex metric on $Q_M$.
Suppose $A\subset E$ is a weak fibred $Z$-set in $E$. We need to show that for every $n$, a map $f\in C_P(\mathbb I^n\times B,E)$ and an open cover $\mathcal U$ of $E$ there is a map $f'\in C_P(\mathbb I^n\times B,E)$ such that
 $f'(\mathbb I^n\times B)\cap A=\varnothing$ and $f'$ is $\mathcal U$-close to $f$. To this end, fix such $f\in C_P(\mathbb I^n\times B,E)$ and an open cover $\mathcal U$ of $E$.
% Since, by Proposition \ref{approximation}, the fibration $p':E\times Q\to B$ has the \rm{FGP}-property, as in Proposition \ref{fibred_position},
%we conclude that
%$p'':E\times Q\times [0,1)\to E\to B$ is a trivial $Q$-manifold bundle.
% Then $M'=p^{-1}(b_0)\times Q\times [0,1/2)$ is also open in $Q_M$.
We identify %$E\times Q\times [0,1)$ with $M\times B$ and
$E$ with $E\times\{\overline 0\}\times\{0\}$, where $\overline 0=(0,0,...)\in Q$. Consider the retraction $r_E:E\times Q\times [0,1)\to E\times\{\overline 0\}\times 0$, $r_E((z_1,z_2,z_3))=(z_1,\overline 0,0)$. Under the above identifications, $r_E$ is a fiber-preserving retraction and $\mathcal V=h(r_E^{-1}(\mathcal U))$ is an open cover of $M\times B$. Moreover, consider the map
$\widetilde f:\mathbb I^n\times B\to M\times B$, $\widetilde f(x,b)=h(f(x,b),\overline 0,0))$. Since $h$ is a fiber-preserving homeomorphism,
$\widetilde f(x,b)$ is of the form $(f_M(x,b),b)$ for each $(x,b)\in\mathbb I^n\times B$.
Then, by Lemma \ref{limitation}, there is a bounded
continuous function $\varepsilon$ on $M\times B$ such that every map from the set
$$\{g\in C_P(\mathbb I^n\times B,M\times B):d_E(\widetilde f(y),g(y))<\varepsilon (\widetilde f(y)) \forall y\in\mathbb I^n\times B\}$$ is $\mathcal V$-close to $\widetilde f$, where
$d_E$ is the continuous pseudo-metric on $M\times B$ generated by $d$. %$d_E(z,z')=d(\pi_M(z),\pi_M(z'))$.a

Since $M$ is open in $Q_M$, the function
$\eta(x,b)=d\big(\pi_M(\widetilde f(x,b)), Q_M\backslash M\big)$ is positive and continuous on $\mathbb I^n\times B$ ($\pi_M:M\times B\to M$ is the projection.
%Let $M''$ be the closure of $M'$ in $M$. Clearly $M''$ is compact and $E\times\{\overline 0\}\times 0$ is a closed subset of $M''\times B$ such that  $r_E|(M''\times B):M''\times B\to E\times\{\overline 0\}\times\{0\}$ is fiber-preserving. Therefore, we may apply
%Since $M\times B$ is paracompact (see \cite{en}) and $E$ is closed in $M\times B$, $E$ is also paracpmpact.
%by Lemma \ref{limitation} to find a bounded continuous function
%with $\varepsilon=\varepsilon_M\circ\pi_M$.
Let $\delta:\mathbb I^n\times B\to (0,\infty)$ be the function defined by $\delta(x,b)=\min\{\eta(x,b),\varepsilon(\widetilde f(x,b))\}$.
\begin{claim}\label{1}
There is a bounded continuous function $\alpha:B\to (0,\infty)$ such that $\alpha(b)<\delta(x,b)$ for each $(x,b)\in\mathbb I^n\times B$.
\end{claim}
Indeed, for every $b\in B$ let $m_b=\min\{\delta(x,b):x\in\mathbb I^n\}$. Then each $m_b>0$ and consider the set-valued map
$\theta:B\rightsquigarrow \mathcal F_c((0,\infty))$, $\theta(b)=(0,m_b]$. This map is lower semi-continuous and, by \cite[Theorem 6.2, p.116]{rs},
$\theta$ has a continuous selection $\alpha$. Clearly, $\alpha$ has the required property.

%Since $r_E$ is a fiber preserving retraction, for each $b\in B$ it generates a retraction $r_b:M\to\pi_M(p^{-1}(b)\times\{\overline 0\}\times\{0\})$. Consider the restriction of $r_b$ on $M''$. Because $\alpha$ is bounded and $M''$ is compact, there is an integer $N$ such that if $u,v\in M''$ and $d(u,v)\leq\alpha(b)/N$, then $d(r_b(u),r_b(v))<\alpha(b)$.

Define the set-valued map
$\varphi:\mathbb I^n\times B\to\mathcal F_c(Q_M)$  by
$$\varphi(x,b)=\{u\in Q_M:d(f_M(x,b),u)\leq\alpha(b)\}.$$
Observe that each $\varphi(x,b)$ is a compact convex subset of $M$ because $\alpha(b)<\eta(x,b)$ for all $(x,b)\in \mathbb I^n\times B$. Since
$f_M$ and $\alpha$ are continuous, $\varphi$ is lower semi-continuous.
%Next, consider the sets $\mu(b)=\pi_M(f(\mathbb I^n\times\{b\}))\subset\pi_M(p^{-1}(b))$, $b\in B$.
Recall that each $A(b)=A\cap p^{-1}(b)$ is a $Z$-set in $p^{-1}(b)$. So is
$A(b)\times Q\times [0,1)$ in the set $p^{-1}(b)\times Q\times [0,1)$. This implies that $H(b)=h(A(b)\times Q\times [0,1))$ is a $Z$-set in
$h(p^{-1}(b)\times Q\times [0,1))$. On the other hand, $h(p^{-1}(b)\times Q\times [0,1))=M\times\{b\}$.
Therefore,
$\psi(b)=\pi_M\big(H(b))$ is a $Z$-set in $M$ for every $b\in B$.

\begin{claim}\label{2}
The set-valued map $\psi:B\rightsquigarrow\mathcal F(M)$  has a closed graph and
$\psi(b)\cap\varphi(x,b)$ is a $Z$-set in $\varphi(x,b)$ for all $(x,b)\in\mathbb I^n\times B$.
\end{claim}
To show that the graph
$Gr(\psi)$ is closed in $B\times M$, take a net $\{(b_\gamma,u_\gamma)\}$ in $Gr(\psi)$ converging to a point $(b^*,u^*)\in B\times M$.
Then for each $\gamma$ we have $u_\gamma\in\psi(b_\gamma)$, so  $(u_\gamma,b_\gamma)\in h(A(b)\times Q\times [0,1))$. This means that
there exists $z_\gamma\in A(b_\gamma)\times Q\times [0,1)$ with $h(z_\gamma)=(u_\gamma,b_\gamma)$. Since the net $\{(u_\gamma,b_\gamma)\}$ converges to $(u^*,b^*)$ in $M\times B$, $\{z_\gamma\}$ converges to $z^*=h^{-1}(u^*,b^*)$ in $E\times Q\times [0,1)$. Since $A\times Q\times [0,1)$ is closed in $E\times Q\times [0,1)$ and contains all $z_\gamma$, $z^*$ belongs to $A\times Q\times [0,1)$. On the other hand,
 $z^*\in p^{-1}(b^*)\times Q\times [0,1)$ because $h$ is fiber-preserving. Hence, $z^*\in (A\times Q\times [0,1))\cap(p^{-1}(b^*)\times Q\times [0,1))$. The last intersection is $A(b^*)\times Q\times [0,1)$, so $h(z^*)=(u^*,b^*)\in H(b^*)$. Therefore, $u^*\in\psi(b^*)$, or equivalently,
 $(b^*,u^*)\in Gr(\psi)$. This shows that $Gr(\psi)$ is closed in $B\times M$.

To prove the second part of Claim \ref{2}, for each
$(x,b)\in\mathbb I^n\times B$ consider the open in $M$ set
$O(x,b)=\{u\in Q_M:d(f_M(x,b),u)<\alpha(b)\}$. Because $\psi(b)$ is a $Z$-set in $M$, $\psi(b)\cap O(x,b)$ is a $Z$-set in $O(x,b)$. Finally, since $\varphi(x,b)$ is the closure of $O(x,b)$, $\psi(b)\cap\varphi(x,b)$ is a $Z$-set in $\varphi(x,b)$.

Since $\psi$ has a closed graph, so has the map $\psi':I^n\times B\rightsquigarrow\mathcal F(M)$, $\psi'(x,b)=\psi(b)$.
Because $\mathbb I^n\times B$ is a paracompact $C$-space, we can apply Lemma \ref{gv} to obtain a map $g_M:\mathbb I^n\times B\to M$ with
$g_M(x,b)\in\varphi(x,b)\backslash\psi'(x,b)$. Then $(g_M(x,b),b)$ defines a map $\widetilde g\in C_P(\mathbb I^n\times B,M\times B)$ with
$\widetilde g(x,b)\in M\times\{b\}\backslash H(b)$. So,
$g(x,b)\in (p^{-1}(b^*)\backslash A(b))\times Q\times [0,1)$ for all $(x,b)\in\mathbb I^n\times B$, where $g\in C_P(\mathbb I^n\times B\to E\times Q\times [0,1))$ is the map defined by $g=h^{-1}\circ\widetilde g$. Because $d(f_M(x,b),g_M(x,b))\leq\alpha(b)<\varepsilon(\widetilde f(x,b))$,
$\widetilde f$ and $\widetilde g$ are $\mathcal V$-close maps. Consequently, $(f(x,b),\overline 0,0)$ and $g(x,b)$ are $r_E^{-1}(\mathcal U)$-close for each $(x,b)$. This implies that $f$ is $\mathcal U$-close to the map $f'\in C_P(\mathbb I^n\times B\to E)$, $f'=r_E\circ g$.
Finally, observe that $g(x,b)\in (p^{-1}(b^*)\backslash A(b))\times Q\times [0,1)$ yields $f'(x,b)\in p^{-1}(b^*)\backslash A(b)$ for all $(x,b)$. Hence, $f'(\mathbb I^n\times B)\cap A=\varnothing$.
 This completes the proof of Theorem \ref{fibred z}.
\end{proof}

%Theorem \ref{fibred z} implies the following characterization of $Q$-manifold bundles over metrizable $C$-spaces:
\begin{thm}\label{*}
Let $p:E\to B$ be a locally compact $ANR$-fibration with compact fibers. Then $p$ is a $Q$-manifold bundle provided the following holds:
\begin{itemize}
%\item[(1)] $p^{-1}(b)$ has the disjoint disks property for every $b\in B$;
\item[(*)] Every point of $B$ has a basis consisting of $C$-spaces $U$ such that each $C_P(\mathbb I^n\times U,p^{-1}(U))$, $n\geq 1$, contains a dense set of weak fibred $Z$-maps.
\end{itemize}
\end{thm}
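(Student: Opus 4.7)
The plan is to reduce the theorem to \cite[Theorem 2.3]{tw}, which says a locally compact $ANR$-fibration with compact fibers is a $Q$-manifold bundle once each fiber is a $Q$-manifold and $p$ has the \rm{FGP}-property. Fix $b_0\in B$. Using hypothesis $(*)$ together with Theorem~\ref{fibred z}, I would choose a $C$-space neighborhood $U$ of $b_0$ over which $p$ is trivial (with fiber-preserving embedding $i:p^{-1}(U)\hookrightarrow X\times U$ and fiber-preserving retraction $r:X\times U\to i(p^{-1}(U))$), such that every $C_P(\mathbb I^n\times U, p^{-1}(U))$, $n\geq 1$, contains a dense set of weak fibred $Z$-maps and every weak fibred $Z$-set in $p^{-1}(U)$ is a fibred $Z$-set. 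Since such $U$ can be chosen around every point of $B$, it suffices to verify (i) each fiber $p^{-1}(b)$, $b\in U$, is a $Q$-manifold, and (ii) $p|p^{-1}(U)$ has the \rm{FGP}-property.

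For (ii), given $f,g\in C_P(\mathbb I^n\times U, p^{-1}(U))$ and an open cover $\mathcal V$ of $p^{-1}(U)$, I would first apply $(*)$ to approximate $g$ by a weak fibred $Z$-map $g'$. As $\mathbb I^n$ is compact, $g'$ is perfect and $g'(\mathbb I^n\times U)$ is closed in $p^{-1}(U)$; by the choice of $U$ this weak fibred $Z$-set is in fact a fibred $Z$-set. The definition of fibred $Z$-set then gives density of maps in $C_P(\mathbb I^n\times U, p^{-1}(U))$ whose image misses $g'(\mathbb I^n\times U)$, so we can pick $f'$ close to $f$ with $f'(\mathbb I^n\times U)\cap g'(\mathbb I^n\times U)=\varnothing$. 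This establishes \rm{FGP} for $p|p^{-1}(U)$.

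For (i), given $\phi\in C(\mathbb I^n, p^{-1}(b))$, I would extend it to a fiber-preserving map $F_\phi\in C_P(\mathbb I^n\times U, p^{-1}(U))$ by setting $F_\phi(x,b')=i^{-1}\bigl(r(\pi_X(i(\phi(x))), b')\bigr)$, where $\pi_X:X\times U\to X$ is the projection. Since $i$ is fiber-preserving, $i(\phi(x))\in X\times\{b\}\cap i(p^{-1}(U))$, and since $r$ is a retraction onto $i(p^{-1}(U))$, one has $F_\phi|\mathbb I^n\times\{b\}=\phi$ under the identification $p^{-1}(b)\cong i(p^{-1}(b))$. Approximating $F_\phi$ by a weak fibred $Z$-map $F'$ via $(*)$ and restricting to $\mathbb I^n\times\{b\}$ produces a map whose image is a $Z$-set in $p^{-1}(b)$ and which approximates $\phi$ (closeness survives restriction since $\mathbb I^n$ is compact, and one can extend any open cover of $p^{-1}(b)$ to one of $p^{-1}(U)$). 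Hence $Z$-maps are dense in $C(\mathbb I^n, p^{-1}(b))$ for every $n\geq 2$, which supplies both the disjoint disks property (via $Z_2$-maps) and a dense family of homological $Z_n$-maps in $p^{-1}(b)$. By \cite[Theorem 2.9]{kv}, $p^{-1}(b)$ is a $Q$-manifold; combined with (ii), \cite[Theorem 2.3]{tw} yields the conclusion.

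The main obstacle I expect is the fiber step (i): ensuring that the extension $F_\phi$ exists, that it is genuinely fiber-preserving and really equals $\phi$ on $\mathbb I^n\times\{b\}$, and most delicately, that approximations of $F_\phi$ in the limitation topology on $C_P(\mathbb I^n\times U, p^{-1}(U))$ descend, upon restriction, to approximations of $\phi$ in $C(\mathbb I^n, p^{-1}(b))$ by $Z$-maps. All of this rests on the fiber-preserving nature of $i$ and $r$, on $r$ being the identity on $i(E)$, and on compactness of $\mathbb I^n$ and the fibers; translating between the two topologies and preserving the $Z$-set property under restriction is the point that needs the most care.
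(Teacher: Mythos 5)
Your proposal is correct, and its core — establishing the FGP property for $p|p^{-1}(U)$ by first approximating with the weak fibred $Z$-maps supplied by $(*)$ and then using Theorem~\ref{fibred z} to upgrade their (weak fibred $Z$-set) images to fibred $Z$-sets that can be avoided — is exactly the paper's argument. Your step (i), verifying via the extension $F_\phi$ and \cite[Theorem 2.9]{kv} that each fiber is a $Q$-manifold, is sound but is omitted by the paper, whose proof stops once FGP is established: \cite[Theorem 2.3]{tw} is being invoked with FGP as the only condition left to check (and indeed essentially the same restriction-to-a-fiber argument you give for $F_\phi$ shows that FGP already forces each compact fiber to satisfy the disjoint $n$-cells properties, hence to be a $Q$-manifold by Toru\'{n}czyk's theorem), so that step is redundant rather than a divergence in method.
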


\begin{proof}
We take a local trivialization $\mathcal U$ of $p$ such that each $U\in\mathcal U$ is a $C$-space and for every $n$ the space
$C_P(\mathbb I^n\times U,p^{-1}(U))$ contains a dense set of weak fibred $Z$-maps. Moreover, by Theorem \ref{fibred z}, we can also assume that
each $U\in\mathcal U$ satisfies the additional condition:
\begin{itemize}
\item Every weak fibred $Z$-set in $p^{-1}(U)$ is a fibred $Z$-set in $p^{-1}(U)$.
\end{itemize}
%The proof is reduced to the case $p$ is a trivial locally compact $ANR$-fibration with compact fibers such that $B$ is
%a $C$-space and for each $n$ the space $C_P(\mathbb I^n\times B,E)$ contains a dense set of weak fibred $Z$-maps. According to \cite[Theorem 2.3]{tw}, it suffices to show that $p$ has the \rm{FGP}-property.
Let show that each $p_U=p|p^{-1}(U)$, $U\in\mathcal U$, has the \rm{FGP}-property.
It suffices to prove that for every $n$ every weak fibred $Z$-map
$f:\mathbb I^n\times U\to p^{-1}(U)$ can be approximated by fiber-preserving maps $f':\mathbb I^n\times U\to p^{-1}(U)$ such that
 $f'(\mathbb I^n\times U)\cap f(\mathbb I^n\times U)=\varnothing$. This follows from the fact that
 the set $A=f(\mathbb I^n\times U)$ is a fibred $Z$-set in $p^{-1}(U)$ because $f$ is weak fibred $Z$-map.
\end{proof}

%%%%%%%%%%%%%%%%%%%%%%%%%%%%%%%%%%%%%%%%%%%%%%%%%%%%%%%%%%%%%%%%%%%
%%%%%%%%%%%%%%%%%%%%%%%%%%%%%%%%%%%%%%%%%%%%%%%%%%%%%%%%%%%%%%%%%%%%%%

\end{document}